\documentclass[final,onefignum,onetabnum]{siamart171218}
\pdfminorversion=7

\usepackage{lipsum}
\usepackage{amsfonts}
\usepackage{graphicx}
\usepackage{epstopdf}
\usepackage{algorithm,algorithmicx,algpseudocode}
\usepackage{multirow}
\usepackage{booktabs}
\usepackage{subfigure}
\usepackage{scalerel}
\ifpdf
  \DeclareGraphicsExtensions{.eps,.pdf,.png,.jpg}
\else
  \DeclareGraphicsExtensions{.eps}
\fi


\newsiamremark{remark}{Remark}
\newsiamremark{hypothesis}{Hypothesis}
\crefname{hypothesis}{Hypothesis}{Hypotheses}
\newsiamthm{claim}{Claim}

\headers{Global randomized block Kaczmarz algorithm}{Yu-Qi Niu and Bing Zheng}

\title{On global randomized block Kaczmarz algorithm for solving large-scale matrix equations\thanks{Submitted to the editors DATE.
\funding{This work was supported by the National Natural Science Foundation of China (Grant No. 12071196).}}}

\author{Yu-Qi Niu
\and Bing Zheng\thanks{Corresponding author. School of Mathematics and Statistics, Lanzhou University, Lanzhou 730000, People’s Republic of China (\email{bzheng@lzu.edu.cn}).}
}

\usepackage{amsopn}
\DeclareMathOperator{\diag}{diag}

\ifpdf
\hypersetup{
  pdftitle={Global randomized block Kaczmar algorithm},
  pdfauthor={Yu-Qi Niu and Bing Zheng}
}
\fi

\begin{document}

\maketitle

\begin{abstract}
The randomized Kaczmarz algorithm is one of the most popular approaches for solving large-scale linear systems due to its simplicity and efficiency. In this paper, we propose two classes of global randomized Kaczmarz methods for solving large-scale linear matrix equations $AXB=C$, the global randomized block Kaczmarz algorithm and global randomized average block Kaczmarz algorithm. The feature of global randomized block Kaczmarz algorithm is the fact that the current iterate is projected onto the solution space of the sketched matrix equation at each iteration, while the global randomized average block Kaczmarz approach is pseudoinverse-free and therefore can be deployed on parallel computing units to achieve significant improvements in the computational time. We prove that these two methods linearly converge in the mean square to the minimum norm solution $X_*=A^\dag CB^\dag$ of a given linear matrix equation. The convergence rates depend on the geometric properties of the data matrices and their submatrices and on the size of the blocks. Numerical results reveal that our proposed algorithms are efficient and effective for solving large-scale matrix equations. In particular, they can also achieve satisfying performance when applied to image deblurring problems.

\end{abstract}

\begin{keywords}
Matrix equations, Randomized block Kaczmarz, Average block Kaczmarz, Expected linear convergence, Image deblurring problem
\end{keywords}

\begin{AMS}
15A24, 65F45, 65F20
\end{AMS}

\section{Introduction}
Consider solving the large-scale matrix equation
\begin{equation}\label{eq:matrix equations}
AXB= C,
\end{equation}
where $A \in \mathbb{R}^{m \times p}$, $B \in \mathbb{R}^{q\times n}$, $C \in  \mathbb{R}^{m\times n}$, and unknown matrix $X \in \mathbb{R}^{p\times q}$. The large-scale linear matrix equation \eqref{eq:matrix equations} arises in many applications such as signal processing \cite{Regalia1989}, photogrammetry \cite{Rauhala1980}, etc. We assume that the equation \eqref{eq:matrix equations} has a solution, i.e., $A^\dag ACBB^\dag=C$. In practice, it is usually enough to find an approximate solution not far away from the minimal Frobenius norm solution $X_*=A^\dag CB^\dag$. Hence how to effectively solve this equation has been important research recently.

Currently, there are many direct methods based on matrix factorization \cite{Chu1987,Fausett1994,Zha1995} and iteration methods \cite{Tian2017,Peng2005,Ding2005,Ding2008,Peng2010-1} for solving large-scale linear matrix equations. However, these methods were designed with ignoring big data matrix equation problems. When applied to matrix equations from signal processing, machine learning, and image restoration, these methods can be infeasible due to exceeding storage location or requiring more time cost. In particular, the matrix equation \eqref{eq:matrix equations} can be written the following equivalent matrix-vector form by the Kronecker product
\begin{equation}\label{eq:systems}
 (B^T\otimes A){\rm vec}(X)={\rm vec}(C),
\end{equation}
where the Kronecker product $(B^T\otimes A)\in \mathbb{R}^{mn \times pq}$, the right-side vector ${\rm vec}(C)\in \mathbb{R}^{mn \times1}$, and the unknown vector ${\rm vec}(X)\in \mathbb{R}^{pq \times1}$. Many iteration methods are proposed to solve the matrix equation \eqref{eq:matrix equations} by applying the Kronecker product, see \cite{Zhang2011,Cvetk2008,Peng2010-2}. Recently, randomized methods have been widely concerned for solving large-scale linear systems, such as randomized SVD \cite{Wei2016,Wei2019,Wei2020},  the randomized Kaczmarz algorithm \cite{Strohmer2009,Zouzias2012,Needell2015,Ma2015,Du20}, and the randomized coordinate descent method \cite{Leventhal2008,Gower,Du2019,Du21}. Randomized Kaczmarz algorithm applied to the linear system \eqref{eq:systems} can also solve the matrix equation \eqref{eq:matrix equations}. Nevertheless, when the dimensions of matrices $A$ and $B$ are large, the dimensions of the linear system \eqref{eq:systems} increase dramatically, which causes these randomized projection algorithms to require extra cache memory and too much time. Du et al. proposed the randomized block coordinate descent (RBCD) method for solving the matrix least-squares problem $\min_{X\in \mathbb{R}^{p\times q}}\|C-AXB\|_F$ in \cite{Du22}. This method is computationally expensive per iteration because it involves large-scale matrix-matrix products.

In this paper, we propose the global block randomized block Kaczmarz (GRBK) algorithm for solving the large-scale matrix equations by determining the solution of the matrix equation \eqref{eq:matrix equations} from the randomized sketched matrix equation \eqref{eq:sketched}. In practice, to avoid computing pseudoinverse, we study a parallelized version of GRBK, in which a weighted average of independent updates is used. Before summarizing our contributions, we first present the notations that are used throughout this paper and briefly describe the properties of the Kronecker product.

\subsection{Notation}
For an integer $m\geq1$, let $[m]=\{1,2,\cdots,m\}$. We denote by $\mathbb{R}^{m\times n}$ the space of all $m\times n$ real matrices, and by $\|\cdot\|_2$ and $\|\cdot\|_F$ the 2-norm and Frobenius norm, respectively. Given two matrices $X,Y\in \mathbb{R}^{n\times n}$, $\langle X,Y\rangle_F={\rm tr}(X^TY)$ is the Frobenius inner product of matrices $X$ and $Y$ and ${\rm tr}(X)$ denotes the trace of $X$, specially, $\langle X,X\rangle_F=\|X\|_F^2$. In addition, for any matrix $A\in^{m\times n}$, we use $A^T$, $A^\dag$, $A_{i,:}$, $A_{:,j}$ to denote the transpose, the pseudoinverse, the $i$th row, the $j$th column of $A$, respectively. We also denote the maximum singular value and minimum nonzero singular value of $X$ by $\sigma_{\max}(X)$ and $\sigma_{\min}(X)$, respectively. For index sets $I\subset[m]$ and $J\subset[n]$, let $A_{I,:}$, $A_{:,J}$ and $A_{I,J}$ denote the row submatrix indexed by $I$, the column submatrix indexed by $J$, and the submatrix that lies in the rows indexed by $I$ and the columns indexed by $J$, respectively. We use $|I|$ to denote the cardinality of a subset $I\subset[m]$. For any random variable $X$, let $\mathbb{E}[X]$ denote its expectation.

\subsection{Kronecker product}
For deriving the convergence of the algorithms in this paper, the Kronecker product is used. We briefly state a few of its useful properties here. More can be found, e.g. in \cite{Graham1981}. For all matrices $A$ and $B$, we have
\begin{equation*}
(A\otimes B)^\dag=A^\dag\otimes B^\dag,~ (A\otimes B)^T=A^T\otimes B^T, ~\|A\otimes B\|_F=\|A\|_F\|B\|_F,
\end{equation*}
and
\begin{equation*}
\sigma_{\max}(A\otimes B)=\sigma_{\max}(A)\sigma_{\max}(B), ~\sigma_{\min}(A\otimes B)=\sigma_{\min}(A)\sigma_{\min}(B).
\end{equation*}
Furthermore, we introduce the ${\rm vec}(\cdot)$ operation by stacking columns. If $X\in \mathbb{R}^{m\times n}$, then
\begin{equation*}
{\rm vec}(X)=\begin{pmatrix}X_{:,1}^T&  \cdots& X_{:,n}^T \end{pmatrix}^T\in \mathbb{R}^{mn\times 1}.
\end{equation*}
If $A\in \mathbb{R}^{m \times p}$, $X\in \mathbb{R}^{p\times q}$, and $B\in \mathbb{R}^{q\times n}$, then
\begin{equation*}
{\rm vec}(AXB)=(B^T\otimes A){\rm vec}(X).
\end{equation*}
\subsection{Contributions}
We propose the global randomized block Kaczmarz (GRBK) algorithm to solve the matrix equations $AXB=C$. The GRBK method uses two randomized strategies to choose two subsets $I_k$ and $J_k$ of the constraints at each iteration, and updates (see \Cref{alg:GRBK}):
\begin{equation*}
X_{k+1}=X_k+A_{I_k,:}^\dag(C_{I_k,J_k}-A_{I_k,:}X_kB_{:,J_k})B_{:,J_k}^\dag.
\end{equation*}
We provide a theoretical guarantee for the GRBK method and demonstrate its performance. Furthermore, to avoid computing pseudoinverses, we propose a global randomized average block Kaczmarz method to solve the matrix equations $AXB=C$ and denote the GRABK method (see \Cref{alg:GRABK}). The updated format of the GRABK method is as follows:
\begin{equation*}
X_{k+1}=X_k+\alpha_k\left(\sum_{i\in I_k,j\in J_k}u_i^{k}v_j^{k}\tfrac{A_{i,:}^T(C_{ij}-A_{i,:}X_kB_{:,j})B_{:,j}^T}{\|A_{i,:}\|^2\|B_{:,j}\|^2}\right),
\end{equation*}
where the stepsize $\alpha_k\in(0,2)$ and the weights $u_i^{k},v_j^{k}\in [0,1]$ such that $\sum_{i\in I_k}u_i^{k}=1$ and $\sum_{j\in J_k}v_j^{k}=1$. In addition, we analyzed the convergence of the GRBK and GRABK methods.
\subsection{Outline}
In \Cref{Sec: GRBK}, we derive the global randomized block Kaczmarz method for solving the large-scale matrix equation. In \Cref{Sec: GRABK}, we define the global randomized average block Kaczmarz methods for solving the large-scale matrix equation and derive new convergence rates. In \Cref{Sec:NR}, we report the numerical results to corroborate our theoretical. Finally, we state brief conclusions in \Cref{Sec:Con}.

\section{The global randomized block Kaczmarz algorithm }\label{Sec: GRBK}
In this paper, we are concerned with the randomized Kaczmarz method to solve the matrix equation $AXB=C$. Since matrix equation \eqref{eq:matrix equations} is difficult to solve directly, our way is to iteratively solve a small randomized version of matrix equation \eqref{eq:matrix equations}. This is, we choose two index sets $I\subseteq[m]$ and  $J\subseteq[n]$ at random, and instead solve the following sketched matrix equation:
\begin{equation}\label{eq:sketched}
A_{I,:}XB_{:,J}=C_{I,J}.
\end{equation}
The sketched matrix equation is of a much smaller dimension than the original one, and hence easier to solve. However, the equation \eqref{eq:sketched} will no longer have a unique solution. In order to construct a method, we need a way of picking a particular solution. Our method defines $X_{k+1}$ to be the solution that is closest to the current iteration $X_k$ in the Frobenius norm. Hence, the next iterate $X_{k+1}$ is the nearest point to $X_k$ that satisfies a sketched matrix equation:
\begin{equation}\label{eq:it-1}
X_{k+1}=\mathop{\arg\min}_{\scaleto{\begin{smallmatrix} A_{I,:}XB_{:,J}=C_{I,J}\\X \in \mathbb{R}^{p\times q} \end{smallmatrix}}{10pt}}\tfrac{1}{2}\|X-X_k\|_F^2.
\end{equation}
In addition, $X_{k+1}$ is the best approximation of $X_*$ in a subspace passing through $X_k$:
\begin{equation}\label{eq:it-2}
\begin{split}
X_{k+1}=&\mathop{\arg\min}_{\scaleto{\begin{smallmatrix}X \in \mathbb{R}^{p\times q}\\ Y\in\mathbb{R}^{|I|\times |J|}\end{smallmatrix}}{10pt}} \tfrac{1}{2}\|X-X_*\|_F^2\\
&{\rm subject~to~~} X=X_k+A_{I,:}^TYB_{:,J}^T,~Y {\rm ~is~free}.
\end{split}
\end{equation}
By substituting the constraint \eqref{eq:it-2} into the objective function, then differentiating with respect to $Y$ to find the stationary point
\begin{equation*}
Y_*=(A_{I,:}A_{I,:}^T)^\dag(C_{I,J}-A_{I,:}X_kB_{:,J})(B_{:,J}^TB_{:,J})^\dag,
\end{equation*}
we obtain  that
\begin{eqnarray}\label{eq:it-3}
\nonumber X_{k+1}&=&X_k+A_{I,:}^T(A_{I,:}A_{I,:}^T)^\dag(C_{I,J}-A_{I,:}X_kB_{:,J})(B_{:,J}^TB_{:,J})^\dag B_{:,J}^T\\
&=&X_k+A_{I,:}^\dag(C_{I,J}-A_{I,:}X_kB_{:,J})B_{:,J}^\dag
\end{eqnarray}
is the solution to \eqref{eq:it-2}. Next, we show the equivalence of \eqref{eq:it-1} and \eqref{eq:it-2} by using Lagrangian duality. The problem \eqref{eq:it-1} has a convex quadratic objective function with linear constraints, hence strong duality holds. Introducing Lagrange multiplier $Y\in \mathbb{R}^{|I|\times |J|}$, the Lagrangian $\mathcal{L}:\mathbb{R}^{p\times q}\times \mathbb{R}^{|I|\times |J|} \rightarrow \mathbb{R}$ associated with the problem \eqref{eq:it-1} as
\begin{equation}\label{eq:dual}
\mathcal{L}(X,Y)=\tfrac{1}{2}\|X-X_k\|_F^2-\langle Y, A_{I,:}XB_{:,J}-C_{I,J}\rangle_F.
\end{equation}
Clearly, the optimal value of the primal problem \eqref{eq:it-1} is
\begin{equation*}
\min_{X \in \mathbb{R}^{p\times q}}\max_{Y\in\mathbb{R}^{|I|\times |J|}}\mathcal{L}(X,Y)
= \min_{\scaleto{\begin{smallmatrix} A_{I,:}XB_{:,J}=C_{I,J}\\X \in \mathbb{R}^{p\times q} \end{smallmatrix}}{10pt}}\tfrac{1}{2}\|X-X_k\|_F^2.
\end{equation*}
The Lagrange dual function $\mathcal{G}:\mathbb{R}^{|I|\times |J|} \rightarrow \mathbb{R}$ as the minimum value of the Lagrangian $\mathcal{L}(X,Y)$ over $X$, i.e.,
\begin{equation*}
\mathcal{G}(Y)=\min_{X \in \mathbb{R}^{p\times q}}\mathcal{L}(X,Y).
\end{equation*}
Differentiating Lagrangian $\mathcal{L}(X,Y)$ with respect to $X$ and setting to zero gives $X=X_k+A_{I,:}^TYB_{:,J}^T$. Substituting into \eqref{eq:dual} gives
\begin{eqnarray*}
\mathcal{L}(X,Y)&=&\tfrac{1}{2}\|A_{I,:}^TYB_{:,J}^T\|_F^2-\langle Y, A_{I,:}(X_k+A_{I,:}^TYB_{:,J}^T)B_{:,J}-C_{I,J}\rangle_F\\
&=& -\tfrac{1}{2}\|A_{I,:}^TYB_{:,J}^T\|_F^2-\langle Y, A_{I,:}(X_k-X_*)B_{:,J}\rangle_F\\
&=& -\tfrac{1}{2}\|A_{I,:}^TYB_{:,J}^T+X_k-X_*\|_F^2+\tfrac{1}{2}\|X_k-X_*\|_F^2.
\end{eqnarray*}
As the term $\tfrac{1}{2}\|X_k-X_*\|_F^2$ does not depend on $X$ and $Y$, substituting $X=X_k+A_{I,:}^TYB_{:,J}^T$ into the last equation, we obtain the dual problem:
\begin{eqnarray*}
\max_{Y}\mathcal{G}(Y)
&=&\max_{\scaleto{\begin{smallmatrix}X=X_k+A_{I,:}^TYB_{:,J}^T \\ X \in \mathbb{R}^{p\times q},Y\in\mathbb{R}^{|I|\times |J|} \end{smallmatrix}}{12pt}}-\tfrac{1}{2}\|X-X_*\|_F^2\\
&=&\min_{\scaleto{\begin{smallmatrix}X=X_k+A_{I,:}^TYB_{:,J}^T \\X \in \mathbb{R}^{p\times q},Y\in\mathbb{R}^{|I|\times |J|} \end{smallmatrix}}{12pt}}\tfrac{1}{2}\|X-X_*\|_F^2.
\end{eqnarray*}
Hence, by strong duality, i.e.,
\begin{equation*}
\min_{X \in \mathbb{R}^{p\times q}}\max_{Y\in\mathbb{R}^{|I|\times |J|}}\mathcal{L}(X,Y)=\max_{Y\in\mathbb{R}^{|I|\times |J|}}\min_{X \in \mathbb{R}^{p\times q}}\mathcal{L}(X,Y),
\end{equation*}
we have the equivalence of \eqref{eq:it-1} and \eqref{eq:it-2}.

Based on \eqref{eq:it-1}, \eqref{eq:it-2} and \eqref{eq:it-3}, we can summarize the methods described in this section as \Cref{alg:GRBK}, which is called the global  randomized block Kaczmarz (GRBK) method.

\begin{algorithm}[!htbp]
\caption{Global Randomized Block Kaczmarz (GRBK)}
\label{alg:GRBK}
\hspace*{0.02in}{\bf Input:} {$A\in \mathbb{R}^{m \times p}$, $B\in \mathbb{R}^{q\times n}$, and $C\in \mathbb{R}^{m \times n}$, $X_0\in \mathbb{R}^{p\times q}$.}\\
\hspace*{0.02in}{\bf Output:} {Last iterate $X_{k+1}$.}
\begin{algorithmic}[1]
\For{$k=0,1,2,\cdots,$}
\State{Select a index set $I_k\subseteq[m]$ with probability $\mathbb{P}(I_k)>0$ such that $$\sum_{I_k\subseteq[m]}\mathbb{P}(I_k)=1;$$}
\State{Select a index set $J_k\subseteq[n]$ with probability $\mathbb{P}(J_k)>0$ such that $$\sum_{J_k\subseteq[n]}\mathbb{P}(J_k)=1;$$}
\State{Update $X_{k+1}=X_k+A_{I_k,:}^\dag(C_{I_k,J_k}-A_{I_k,:}X_kB_{:,J_k})B_{:,J_k}^\dag$;}
\EndFor
\end{algorithmic}
\end{algorithm}

At each iteration, the current iterate $X_k$ is projected onto the solution space of the sketched matrix equation $A_{I_k,:}XB_{:, J_k}=C_{I_k, J_k}$.  The index sets $I_k\subseteq[m]$ and $J_k\subseteq[n]$ are selected according to probability distribution $\mathbb{P}(I_k)$ and $\mathbb{P}(J_k)$, respectively. Before we prove convergence, let us define some notations. Assume that any matrix $M$ has singular value decomposition $M=U\Sigma V^T$, where orthogonal matrices $U\in \mathbb{R}^{m\times m}$ and $V\in \mathbb{R}^{n\times n}$, and $\Sigma=\diag(\sigma_1,\cdots,\sigma_p)\in\mathbb{R}^{m\times n},~p=\min\{m,n\}$. We define $M^{\dag\frac{1}{2}}=V\Sigma^{\dag\frac{1}{2}}U^T$, where $\Sigma^{\dag\frac{1}{2}} = \diag(\sigma_1^{-\frac{1}{2}}, \cdots, \sigma_p^{-\frac{1}{2}}) \in\mathbb{R}^{n\times m}$. We define orthogonal projection
\begin{equation*}
P_1=A_{I_k,:}^\dag A_{I_k,:}, P_2=B_{:,J_k}B_{:,J_k}^\dag.
\end{equation*}
Then $P_1^T=P_1$, $P_1^2=P_1$, $P_2^T=P_2$, and $P_2^2=P_2$. The expectation of $P_1$ is
\begin{eqnarray*}
\mathbb{E}[P_1]  &=&\sum_{I_k\subseteq[m]}\mathbb{P}(I_k)A_{I_k,:}^\dag A_{I_k,:}\\
&=&\sum_{I_k\subseteq[m]}\mathbb{P}(I_k)A_{I_k,:}^T(A_{I_k,:}A_{I_k,:}^T)^\dag A_{I_k,:}\\
&=&\sum_{I_k\subseteq[m]}A_{I_k,:}^T(A_{I_k,:}A_{I_k,:}^T)^{\dag\tfrac{1}{2}}\mathbb{P}(I_k)  (A_{I_k,:}A_{I_k,:}^T)^{\dag\tfrac{1}{2}}A_{I_k,:}\\
&=&(\Delta A)^T(\Delta A),
\end{eqnarray*}
where $\Delta=\diag (\mathbb{P}(I_k)^{\frac{1}{2}}(A_{I_k,:}A_{I_k,:}^T)^{\dag\frac{1}{2}}, I_k\subseteq [m])$ is block diagonal matrix.
Similarly, the expectation of $P_2$ is
\begin{equation*}
\mathbb{E}[P_2]=(B\Gamma)(B\Gamma)^T,
\end{equation*}
where $\Gamma=\diag(\mathbb{P}(J_k)^{\frac{1}{2}}(B_{:,J_k}^TB_{:,J_k})^{\dag\frac{1}{2}}, J_k\subseteq [n])$ is block diagonal matrix.
We now analyze the convergence of the error $X_k-X_*$ for iterates of \Cref{alg:GRBK}. This result, stated in \Cref{Th:cov-1}, shows that the \Cref{alg:GRBK} will converge linearly to the solution of minimal Frobenius norm in expectation.

\begin{theorem}\label{Th:cov-1}
Let $X_*$ be the minimal Frobenius norm solution of $AXB=C$, and $X_k$ be the $k$th approximation of $X_*$ generated by the GRBK method. The expected norm of the error at the $k$th iteration satisfies
\begin{equation}\label{eq:cov-1}
\mathbb{E}\|X_k-X_*\|_F^2\leq\left(1-\sigma_{\min}^2(\Delta)\sigma_{\min}^2(\Gamma)\sigma_{\min}^2(A)\sigma_{\min}^2(B)\right)^k\|X_0-X_*\|_F^2.
\end{equation}
\end{theorem}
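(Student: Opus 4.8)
The plan is to track the error matrix $E_k = X_k - X_*$ and show it contracts in expectation by the stated factor at every step. Since $X_*$ solves the sketched equation exactly, $C_{I_k,J_k} = A_{I_k,:}X_*B_{:,J_k}$, so the update in \Cref{alg:GRBK} rearranges to $X_{k+1} = X_k - A_{I_k,:}^\dag A_{I_k,:}(X_k - X_*)B_{:,J_k}B_{:,J_k}^\dag$, i.e. $E_{k+1} = E_k - P_1 E_k P_2$ with $P_1, P_2$ the orthogonal projections defined before the theorem. Expanding $\|E_{k+1}\|_F^2 = \|E_k\|_F^2 - 2\langle E_k, P_1 E_k P_2\rangle_F + \|P_1 E_k P_2\|_F^2$ and using $P_1^T = P_1 = P_1^2$, $P_2^T = P_2 = P_2^2$ together with the cyclic invariance of the trace, the quadratic term simplifies to $\|P_1 E_k P_2\|_F^2 = {\rm tr}(E_k^T P_1 E_k P_2) = \langle E_k, P_1 E_k P_2\rangle_F$, so the identity collapses to $\|E_{k+1}\|_F^2 = \|E_k\|_F^2 - {\rm tr}(E_k^T P_1 E_k P_2)$.

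Next I would take the expectation conditioned on $X_k$. Because $I_k$ and $J_k$ are drawn independently, the expectation passes through the trace onto each projection separately, giving $\mathbb{E}[\,\|E_{k+1}\|_F^2 \mid X_k] = \|E_k\|_F^2 - {\rm tr}(E_k^T\,\mathbb{E}[P_1]\,E_k\,\mathbb{E}[P_2])$, where $\mathbb{E}[P_1] = (\Delta A)^T(\Delta A)$ and $\mathbb{E}[P_2] = (B\Gamma)(B\Gamma)^T$ are the matrices computed just above the theorem. The whole problem is thereby reduced to establishing the lower bound ${\rm tr}(E_k^T\,\mathbb{E}[P_1]\,E_k\,\mathbb{E}[P_2]) \geq \sigma_{\min}^2(\Delta)\sigma_{\min}^2(\Gamma)\sigma_{\min}^2(A)\sigma_{\min}^2(B)\,\|E_k\|_F^2$.

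This lower bound is the crux and the step I expect to be the main obstacle; it rests on two facts. First, the error stays confined to the correct subspace: from $X_* = A^\dag CB^\dag$ one checks $A^\dag A X_* = X_*$ and $X_* BB^\dag = X_*$, while each update satisfies $A^\dag A P_1 = P_1$ and $P_2 BB^\dag = P_2$ (because ${\rm range}(A_{I_k,:}^T)\subseteq{\rm range}(A^T)$ and ${\rm range}(B_{:,J_k})\subseteq{\rm range}(B)$), so an induction started from an initial guess in the same subspace (e.g. $X_0 = 0$) yields $A^\dag A E_k = E_k$ and $E_k BB^\dag = E_k$ for all $k$. Second, since $\mathbb{E}[P_1]$ and $\mathbb{E}[P_2]$ are averages of orthogonal projections, a Rayleigh-quotient estimate restricted to ${\rm range}(A^T)$ and ${\rm range}(B)$ — using $\|\Delta A x\|_2^2 \geq \sigma_{\min}^2(\Delta)\sigma_{\min}^2(A)\|x\|_2^2$ for $x \in {\rm range}(A^T)$, which combines submultiplicativity of the smallest nonzero singular value with the assumption that the sampling covers every row of $A$ and column of $B$ — shows that $\mathbb{E}[P_1] - \sigma_{\min}^2(\Delta)\sigma_{\min}^2(A)\,A^\dag A$ and $\mathbb{E}[P_2] - \sigma_{\min}^2(\Gamma)\sigma_{\min}^2(B)\,BB^\dag$ are positive semidefinite. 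Combining these through the monotonicity of the bilinear trace form ${\rm tr}(F^T G F H)$ in its positive semidefinite factors $G, H$, and then collapsing ${\rm tr}(E_k^T A^\dag A E_k BB^\dag) = \|E_k\|_F^2$ by the subspace property, delivers exactly the required inequality.

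Finally I would assemble the recursion: the previous steps give $\mathbb{E}[\,\|E_{k+1}\|_F^2 \mid X_k] \leq \left(1 - \sigma_{\min}^2(\Delta)\sigma_{\min}^2(\Gamma)\sigma_{\min}^2(A)\sigma_{\min}^2(B)\right)\|E_k\|_F^2$. Taking total expectations and applying the tower rule, a routine induction on $k$ raises the contraction factor to the $k$th power and yields \eqref{eq:cov-1}; one also notes the factor lies in $[0,1)$, since $\mathbb{E}[P_1]$ and $\mathbb{E}[P_2]$ are dominated by the identity, so the estimate is a genuine linear rate.
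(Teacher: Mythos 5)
Your proposal is correct and follows essentially the same route as the paper: the same fixed-point identity $E_{k+1}=E_k-P_1E_kP_2$, the same one-step conditional decrement $\langle \mathbb{E}[P_1]E_k,\,E_k\,\mathbb{E}[P_2]\rangle_F$ obtained from the independence of $I_k$ and $J_k$, and the same lower bound in terms of $\sigma_{\min}(\Delta)$, $\sigma_{\min}(\Gamma)$, $\sigma_{\min}(A)$, $\sigma_{\min}(B)$ — the paper phrases that bound via Kronecker products and ${\rm vec}$, you via a positive-semidefinite ordering, and the two are interchangeable. The one place you go beyond the paper is the range-confinement argument $A^\dag AE_k=E_k=E_kBB^\dag$: since $\sigma_{\min}$ here is the smallest \emph{nonzero} singular value, the step $\|(B^T\otimes A){\rm vec}(E_k)\|_2\ge\sigma_{\min}(B^T\otimes A)\|E_k\|_F$ genuinely requires ${\rm vec}(E_k)\in{\rm range}(B\otimes A^T)$, which the paper uses silently and which, as you correctly note, forces an implicit hypothesis on the initial guess (e.g.\ $X_0=0$) when $A$ is not of full column rank or $B$ not of full row rank.
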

\begin{proof}
The step 3 of \Cref{alg:GRBK} can be rewritten as a simple fixed point formula
\begin{equation*}
X_{k+1}-X_*=X_k-X_*-A_{I_k,:}^\dag A_{I_k,:}(X_k-X_*)B_{:,J_k}B_{:,J_k}^\dag,
\end{equation*}
Since
\begin{eqnarray*}
&&\langle X_{k+1}-X_k, X_{k+1}-X_*\rangle_F\\
&&=\langle P_1(X_*-X_k)P_2, (X_k-X_*)-P_1(X_k-X_*)P_2 \rangle_F \\
&&=\langle P_1(X_*-X_k)P_2, (X_k-X_*)\rangle_F-\langle P_1(X_*-X_k)P_2, P_1(X_k-X_*)P_2 \rangle_F\\
&&=\langle P_1(X_*-X_k), (X_k-X_*)P_2\rangle_F-\langle P_1(X_*-X_k), (X_k-X_*)P_2 \rangle_F\\
&&=0.
\end{eqnarray*}
It follows that
\begin{equation*}
\|X_{k+1}-X_*\|_F^2=\|X_k-X_*\|_F^2-\|X_{k+1}-X_k\|_F^2.
\end{equation*}
Taking conditional expectations, we get
\begin{equation}\label{eq:p-1}
\mathbb{E}[\|X_{k+1}-X_*\|_F^2|X_k]=\|X_k-X_*\|_F^2-\mathbb{E}[\|X_{k+1}-X_k\|_F^2|X_k].
\end{equation}
Since
\begin{eqnarray*}
\|X_{k+1}-X_k\|_F^2 &=&\langle P_1(X_k-X_*)P_2, P_1(X_k-X_*)P_2 \rangle_F \\
&=&\langle P_1(X_k-X_*), (X_k-X_*)P_2 \rangle_F.
\end{eqnarray*}
Hence
\begin{eqnarray*}
\mathbb{E}[\|X_{k+1}-X_k\|_F^2|X_k] &=&
\langle \mathbb{E}[P_1](X_k-X_*), (X_k-X_*)\mathbb{E}[P_2] \rangle_F\\
&=&\left\langle (\Delta A)^T(\Delta A)(X_k-X_*), (X_k-X_*)(B\Gamma)(B\Gamma)^T \right\rangle_F\\
&=&\|(\Delta A)(X_k-X_*)(B\Gamma)\|_F^2.
\end{eqnarray*}
Using the fact
\begin{eqnarray*}
\|(\Delta A)(X_k-X_*)(B\Gamma)\|_F^2&=&\|[(B\Gamma)^T\otimes (\Delta A)]{\rm vec}(X_k-X_*)\|_2^2\\
&=&\|(\Gamma^T\otimes \Delta)(B^T\otimes A){\rm vec}(X_k-X_*)\|_2^2\\
&\geq&\sigma_{\min}^2(\Gamma^T\otimes \Delta)\|(B^T\otimes A){\rm vec}(X_k-X_*)\|_2^2\\
&\geq&\sigma_{\min}^2(\Gamma^T\otimes \Delta)\sigma_{\min}^2(B^T\otimes A)\|X_k-X_*\|_F^2\\
&=&\sigma_{\min}^2(\Delta)\sigma_{\min}^2(\Gamma)\sigma_{\min}^2(A)\sigma_{\min}^2(B)\|X_k-X_*\|_F^2,
\end{eqnarray*}we have
\begin{equation}\label{eq:p-2}
\mathbb{E}[\|X_{k+1}-X_k\|_F^2|X_k]  \geq \sigma_{\min}^2(\Delta)\sigma_{\min}^2(\Gamma)\sigma_{\min}^2(A)\sigma_{\min}^2(B)\|X_k-X_*\|_F^2.
\end{equation}
Thus, combining \eqref{eq:p-1} and \eqref{eq:p-2}, we can obtain an estimate an follows:
\begin{equation*}
\mathbb{E}[\|X_{k+1}-X_*\|_F^2|X_k]\leq\left(1-\sigma_{\min}^2(\Delta)\sigma_{\min}^2(\Gamma)\sigma_{\min}^2(A)\sigma_{\min}^2(B)\right)
\|X_k-X_*\|_F^2.
\end{equation*}
Taking the full expectation of both sides, we conclude that
\begin{equation*}
\mathbb{E}[\|X_{k+1}-X_*\|_F^2]\leq\left(1-\sigma_{\min}^2(\Delta)\sigma_{\min}^2(\Gamma)\sigma_{\min}^2(A)\sigma_{\min}^2(B)\right)
\mathbb{E}[\|X_k-X_*\|_F^2].
\end{equation*}
By induction, we complete the proof.
\end{proof}
\begin{remark}\label{Re:r-1}
If the sets $[m]$ and $[n]$ are partitioned by
\begin{equation*}
[m]=\{I_1, \cdots, I_s\}, ~[n]=\{J_1, \cdots, J_t\},
\end{equation*}
and the index sets $I_k\subseteq[m]$ and $J_k\subseteq[n]$ are selected according to probability distribution
\begin{equation*}
\mathbb{P}(I_k)=\tfrac{\|A_{I_k,:}\|_F^2}{\|A\|_F^2}~{\rm and}~\mathbb{P}(J_k)=\tfrac{\|B_{:,J_k}\|_F^2}{\|B\|_F^2},
\end{equation*}
respectively. Then
\begin{eqnarray*}
\sigma_{\min}(\Delta) &=&\min_{1\leq i\leq s}\tfrac{\|A_{I_i,:}\|_F}{\|A\|_F}\sigma_{\min}\left((A_{I_i,:}A_{I_i,:}^T)^{\dag\tfrac{1}{2}}\right)\\
&=&\min_{1\leq i\leq s}\tfrac{\|A_{I_i,:}\|_F}{\|A\|_F}\sigma_{\max}^{-1}(A_{I_i,:})\\
&=&\tfrac{1}{\|A\|_F}\left(\max_{1\leq i\leq s}\tfrac{\sigma_{\max}(A_{I_i,:})}{\|A_{I_i,:}\|_F}\right)^{-1}
\end{eqnarray*}
and
\begin{equation*}
\sigma_{\min}(\Gamma)=\tfrac{1}{\|B\|_F}\left(\max_{1\leq j\leq t}\tfrac{\sigma_{\max}(B_{:,J_j})}{\|B_{:,J_j}\|_F}\right)^{-1}.
\end{equation*}
Hence, the upper bound estimate of \eqref{eq:cov-1} becomes
\begin{equation}\label{eq:cov-1-1}
\mathbb{E}\|X_k-X_*\|_F^2\leq\left(1-\tfrac{\sigma_{\min}^2(A)}{\|A\|_F^2 \beta_{\max}^2(A)}
\tfrac{\sigma_{\min}^2(B)}{\|B\|_F^2 \beta_{\max}^2(B)}\right)^k\|X_0-X_*\|_F^2,
\end{equation}
where
\begin{equation}\label{eq:beta}
\beta_{\max}(A)=\max_{1\leq i\leq s}\tfrac{\sigma_{\max}(A_{I_i,:})}{\|A_{I_i,:}\|_F}{\rm~and~}
\beta_{\max}(B)=\max_{1\leq j\leq t}\tfrac{\sigma_{\max}(B_{:,J_j})}{\|B_{:,J_j}\|_F}.
\end{equation}
\end{remark}

Assume that $\beta_{\max}(A)=\tfrac{\sigma_{\max}(A_{I_{i_0},:})}{\|A_{I_{i_0},:}\|_F}$ and $\beta_{\max}(B)= \tfrac{\sigma_{\max} (B_{:,J_{j_0}})} {\|B_{:,J_{j_0}}\|_F}$. As
\begin{equation*}
\sigma_{\max}(A_{I_{i_0},:})>\sigma_{\min}(A), ~\|A\|_F>\|A_{I_{i_0},:}\|_F,
\end{equation*}
it holds that
\begin{equation*}
0<\tfrac{\sigma_{\min}^2(A)}{\|A\|_F^2 \beta_{\max}^2(A)}= \tfrac{\sigma_{\min}^2(A)\|A_{I_{i_0},:}\|_F^2}{\|A\|_F^2\sigma_{\max}^2(A_{I_{i_0},:})}<1.
\end{equation*}
Similarly, we have
\begin{equation*}
0<\tfrac{\sigma_{\min}^2(B)}{\|B\|_F^2 \beta_{\max}^2(B)}<1.
\end{equation*}
Thus, the convergence factor of inequality \eqref{eq:cov-1-1} is less than 1 and greater than 0. So the GRBK method converges to the minimal Frobenius norm solution of  $AXB = C$.

\begin{remark}\label{Re:r-2}
Now, let us consider the block index sets size $|I_k|=|J_k|=1$. In this case, the indices $i_k\in [m]$ and $j_k\in [n]$ are selected according to a probability distribution $\mathbb{P}(i_k)=\frac{\|A_{i_k,:}\|^2}{\|A\|_F^2}$ and $\mathbb{P}(j_k)=\frac{\|B_{:,j_k}\|^2}{\|B\|_F^2}$, respectively. Then, the update \eqref{eq:it-3} becomes
\begin{equation}\label{eq:it-3a}
X_{k+1}=X_k+\tfrac{A_{i_k,:}^T(C_{i_kj_k}-A_{i_k,:}X_kB_{:,j_k})B_{:,j_k}^T}{\|A_{i_k,:}\|^2\|B_{:,j_k}\|^2},
\end{equation}
which is called the global randomized Kaczmarz (GRK) method. Since
\begin{equation*}
\beta_{\max}(A)=\max_{1\leq i\leq m}\tfrac{\sigma_{\max}(A_{i,:})}{\|A_{i,:}\|_F}=1
~{\rm and}~
\beta_{\max}(B)=\max_{1\leq j\leq n}\tfrac{\sigma_{\max}(B_{:,j})}{\|B_{:,j}\|_F}=1.
\end{equation*}
Then, we get a linear convergence rate in the expectation of the form
\begin{equation}\label{eq:cov-1-2}
\mathbb{E}[\|X_k-X_*\|_F^2]\leq\left(1-\tfrac{\sigma_{\min}^2(A)}{\|A\|_F^2}\tfrac{\sigma_{\min}^2(B)}{\|B\|_F^2}\right)^k\|X_0-X_*\|_F^2.
\end{equation}
\end{remark}

Comparing \eqref{eq:cov-1-2} with the convergence rate \eqref{eq:cov-1-1}, since $\beta_{\max}(A)$ and $\beta_{\max}(B)$ are less than or equal to 1, we show the convergence factor of the GRBK method is smaller than that of the GRK method, which reveals that the GRBK method has a significant speed-up.

\section{The global randomized average block Kaczmarz algorithms}\label{Sec: GRABK}
In this section, we develop new variants of the global randomized block Kaczmarz algorithm for solving large-scale linear matrix equation $AXB=C$. In practice, the main drawback of \eqref{eq:it-3} is that each iteration is expensive and difficult to parallelize, since we need to compute the pseudoinverse of two submatrices. To take advantage of parallel computation and speed up the convergence of GRK, we consider a simple extension of the GRK method, where at each iteration multiple independent updates are computed in parallel and a weighted average of the updates is used. Specifically, we write the averaged GRK update:
\begin{equation}\label{eq:it-4}
X_{k+1}=X_k+\alpha_k\left(\sum_{i\in I_k,j\in J_k}u_i^{k}v_j^{k}\tfrac{A_{i,:}^T(C_{ij}-A_{i,:}X_kB_{:,j})B_{:,j}^T}{\|A_{i,:}\|^2\|B_{:,j}\|^2}\right),
\end{equation}
where the stepsize $\alpha_k\in(0,2)$ and the weights $u_i^{k},v_j^{k}\in [0,1]$ such that $\sum_{i\in I_k}u_i^{k}=1$ and $\sum_{j\in J_k}v_j^{k}=1$. The averaged GRK is detailed in \Cref{alg:GRABK}. If $I_k$ and $J_k$ are sets of size one, i.e. $I_k={i_k}$ and $J_k=\{j_k\}$, and $u_i^{k},v_j^{k}=1$ for $i=1,\cdots,m,j=1,\cdots,n$ and $k\geq0$, we recover the GRK method.

\begin{algorithm}[!htbp]
\caption{Global  Randomized Average Block Kaczmarz (GRABK)}
\label{alg:GRABK}
\hspace*{0.02in}{\bf Input:} {$A\in \mathbb{R}^{m \times p}$, $B\in \mathbb{R}^{q\times n}$, $C\in \mathbb{R}^{m \times n}$, $X_0\in \mathbb{R}^{p\times q}$, weights $u_i^{(k)}\geq0$, $v_j^{(k)}\geq0$, and stepsizes $\alpha\geq0$.}\\
\hspace*{0.02in}{\bf Output:} {Last iterate $X_{k+1}$.}
\begin{algorithmic}[1]
\For{$k=0,1,2,\cdots,$}
\State{Select a index set $I_k\subseteq[m]$ with probability $\mathbb{P}(I_k)>0$ such that $$\sum_{I_k\subseteq[m]}\mathbb{P}(I_k)=1;$$}
\State{Select a index set $J_k\subseteq[n]$ with probability $\mathbb{P}(J_k)>0$ such that $$\sum_{J_k\subseteq[n]}\mathbb{P}(J_k)=1;$$}
\State {Update $X_{k+1}=X_k+\alpha_k\left(\sum\limits_{i\in I_k,j\in J_k} u_i^{(k)}v_j^{(k)} \tfrac{A_{i,:}^T(C_{ij}-A_{i,:}X_kB_{:,j})B_{:,j}^T} {\|A_{i,:}\|^2\|B_{:,j}\|^2}\right) $.}
\EndFor
\end{algorithmic}
\end{algorithm}

Recall that the weights satisfy $0\leq u_i^{k},v_j^{k}\leq 1$ and $\sum_{i\in I_k}u_i^{k}=\sum_{j\in J_k}v_j^{k}=1$. Hence, we assume that the bounded of weights satisfy
\begin{equation*}
0<u_{\min}\leq u_i^{k}\leq u_{\max}<1~{\rm and}~0<v_{\min}\leq u_i^{k}\leq v_{\max}<1
\end{equation*}
for all $i\in I_k, j\in J_k$ and $k\geq0$. If the weights $u_i^{k}=\tfrac{\|A_{i,:}\|^2}{\|A_{I_k,:}\|_F^2}$ and $u_i^{k}=\tfrac{\|B_{:,j}\|^2}{\|B_{:,J_k}\|_F^2}$ for all $k\geq0$, we get the following compact update:
\begin{equation*}
X_{k+1}=X_k-\alpha_k\tfrac{A_{I_k,:}^T(A_{I_k,:}X_kB_{:,J_k}-C_{I_k,J_k})B_{:,J_k}^T}{\|A_{I_k,:}\|_F^2\|B_{:,J_k}\|_F^2}.
\end{equation*}
If the weights $u_i^{k}=\tfrac{1}{|I_k|}$ and $v_j^{k}=\tfrac{1}{|J_k|}$ for all $k\geq0$, we get the following compact update:
\begin{equation*}
X_{k+1}=X_k-\alpha_k\tfrac{A_{I_k,:}^TD_{I_k}^2(A_{I_k,:}X_kB_{:,J_k}-C_{I_k,J_k})D_{J_k}^2B_{:,J_k}^T}{|I_k||J_k|},
\end{equation*}
where the diagonal matrices
\begin{equation}\label{eq:D}
\begin{split}
&D_{I_k}=\diag(\|A_{i,:}\|^{-1}, ~i\in I_k) \in \mathbb{R}^{|I_k|\times|I_k|},\\
&D_{J_k}=\diag(\|B_{:,j}\|^{-1}, ~j\in J_k) \in \mathbb{R}^{|J_k|\times|J_k|}.
\end{split}
\end{equation}
In the rest of the section, we assume that the index set $I_k\subseteq[m]$ is chosen with probability $\mathbb{P}(I_k)>0$ such that $\sum_{I_k\subseteq[m]}\mathbb{P}(I_k)= 1$, and the index set $J_k\subseteq[n]$ is chosen with probability is $\mathbb{P}(J_k)>0$ such that $\sum_{J_k\subseteq[n]}\mathbb{P}(J_k )=1$. Let
\begin{equation*}
\tilde{A}_{I_k,:}=D_{I_k}A_{I_k,:} {\rm~and~} \tilde{B}_{:,J_k}=B_{:,J_k}D_{J_k}.
\end{equation*}
Then, the following equalities hold:
\begin{eqnarray*}
\mathbb{E}\left[(\tilde{A}_{I_k,:})^T\tilde{A}_{I_k,:}\right]&=&\sum_{I_k\subseteq[m]}\mathbb{P}(I_k)
\left[(D_{I_k}A_{I_k,:})^T(D_{I_k}A_{I_k,:})\right]\\
&=&(D_{_A}A)^T(D_{_A}A),
\end{eqnarray*}
and
\begin{eqnarray*}
\mathbb{E}\left[\tilde{B}_{:,J_k}(\tilde{B}_{:,J_k})^T\right]&=&\sum_{J_k\subseteq[n]}\mathbb{P}(J_k)
\left[(B_{:,J_k}D_{J_k})(B_{:,J_k}D_{J_k})^T\right]\\
&=&(BD_{_B})(BD_{_B})^T,
\end{eqnarray*}
where the block diagonal matrices
\begin{equation}\label{eq:D-1}
D_{_A}=\mathbb{P}(I_k)^{\frac{1}{2}}{\rm diag}(D_{I_k},I_k\subseteq[m]) {\rm~and~}
D_{_B}=\mathbb{P}(J_k)^{\frac{1}{2}}{\rm diag}(D_{J_k},J_k\subseteq[n]).
\end{equation}
Before we talk about stepsize $\alpha_k$,  let us define following notations:
\begin{equation}\label{eq: gamma}
\gamma_{\max}(A)=\max_{I_k\subseteq [m]}\sigma_{\max}(\tilde{A}_{I_k,:}),~
\gamma_{\max}(B)=\max_{J_k\subseteq [n] }\sigma_{\max}(\tilde{B}_{:,J_k}).
\end{equation}

By the iterative scheme \eqref{eq:it-4} and $C_{ij}=A_{i,:}X_*B_{:,j}$ for all $i\in [m]$ and $j\in[n]$, we have
\begin{equation*}
X_{k+1}-X_*=(X_k-X_*)-\alpha_k\left(\sum_{i\in I_k,j\in J_k}u_i^{k}v_j^{k}\tfrac{A_{i,:}^TA_{i,:}(X_k-X_*)B_{:,j}B_{:,j}^T}{\|A_{i,:}\|^2\|B_{:,j}\|^2}\right).
\end{equation*}
It follows that
\begin{equation}\label{eq:unfold}
\begin{split}
\|X_{k+1}-X_*\|_F^2&=\|X_k-X_*\|_F^2\\
&-2\alpha_k\left\langle \sum_{i\in I_k,j\in J_k}u_i^{k}v_j^{k} \tfrac{A_{i,:}^TA_{i,:}(X_k-X_*)B_{:,j}B_{:,j}^T}{\|A_{i,:}\|^2\|B_{:,j}\|^2},  X_k-X_*\right\rangle_F\\
&+\alpha_k^2\left\|\sum_{i\in I_k,j\in J_k}u_i^{k}v_j^{k} \tfrac{A_{i,:}^TA_{i,:}(X_k-X_*)B_{:,j}B_{:,j}^T}{\|A_{i,:}\|^2\|B_{:,j}\|^2}\right\|_F^2.
\end{split}
\end{equation}
In order to ensure strictly decrease of the sequence $\{\|X_k-X_*\|_F^2\}_{k=0}^{\infty}$, we need
\begin{equation*}
\scaleto{-2\alpha_k\left\langle \sum_{i\in I_k,j\in J_k}u_i^{k}v_j^{k} \tfrac{A_{i,:}^TA_{i,:}(X_k-X_*)B_{:,j}B_{:,j}^T}{\|A_{i,:}\|^2\|B_{:,j}\|^2},  X_k-X_*\right\rangle_F
+ \alpha_k^2\left\|\sum_{i\in I_k,j\in J_k}u_i^{k}v_j^{k}\tfrac{A_{i,:}^TA_{i,:}(X_k-X_*)B_{:,j}B_{:,j}^T}{\|A_{i,:}\|^2\|B_{:,j}\|^2}\right\|_F^2<0.}{30pt}
\end{equation*}
That is to say
\begin{equation}\label{Ieq:sizestep}
0<\alpha_k<2L_k,~L_k=
\tfrac{\left\langle \sum_{i\in I_k,j\in J_k}u_i^{k}v_j^{k} \frac{A_{i,:}^TA_{i,:}(X_k-X_*)B_{:,j}B_{:,j}^T}{\|A_{i,:}\|^2\|B_{:,j}\|^2},  X_k-X_*\right\rangle_F }
{\left\|\sum_{i\in I_k,j\in J_k}u_i^{k}v_j^{k}\frac{A_{i,:}^TA_{i,:}(X_k-X_*)B_{:,j}B_{:,j}^T}{\|A_{i,:}\|^2\|B_{:,j}\|^2}\right\|_F^2}.
\end{equation}
Next, we consider the global randomized average block Kaczmarz algorithm with constant stepsize and adaptive stepsize.

\subsection{The global randomized average block Kaczmarz algorithm with constant stepsize}
In this section, we study global randomized average block Kaczmarz algorithm with constant stepsize $\alpha_k=\alpha$ and the weights $u_i^{k}=u_i,v_j^{k}=v_j$. Hence, the update format \eqref{eq:it-4} becomes
\begin{equation*}
X_{k+1}=X_k+\alpha\left(\sum_{i\in I_k,j\in J_k}u_iv_j\tfrac{A_{i,:}^T(C_{ij}-A_{i,:}X_kB_{:,j})B_{:,j}^T}{\|A_{i,:}\|^2\|B_{:,j}\|^2}\right).
\end{equation*}
The weights satisfy $0<u_{\min}\leq u_i\leq u_{\max}<1,~0<v_{\min}\leq u_i\leq v_{\max}<1$ for all $i,~j$ and $\sum_{i\in I_k}u_i=\sum_{j\in J_k}v_j=1$. For each iteration, we want the step size to be consistent. Therefore, we need to find a lower bound on $L_k$. Here, we consider a constant stepsize of the form
\begin{equation}\label{eq:size}
\alpha=\eta \alpha_*
\end{equation}
for some $\eta\in(0,2)$,  where $\alpha_*=\tfrac{u_{\min}v_{\min}}{u_{\max}^2v_{\max}^2\gamma_{\max}^2(A)\gamma_{\max}^2(B)}$ is a lower bound on $L_k$, $\gamma_{\max}(A)$ and $\gamma_{\max}(B)$ are shown in equation \eqref{eq: gamma}. See proof of \Cref{Th:cov-2} for details. \Cref{Th:cov-2} proves the convergence rate of \Cref{alg:GRABK} with constant stepsize $\alpha$, which depends explicitly on the geometric properties of the matrices $A, B$ and submatrices $A_{I_k,:},B_{:,J_k}$.
\begin{theorem}\label{Th:cov-2}
Let $X_*$ be the minimal Frobenius norm solution of $AXB=C$, and $X_k$ be the $k$th approximation of $X_*$ generated by the GRABK method with the weights $0<u_{\min}\leq u_i\leq u_{\max}<1$ for all $i\in [m]$, and $0<v_{\min}\leq v_j\leq v_{\max}<1$ for all $j\in [n]$, and the stepsize $\alpha=\eta \alpha_*$ for some $\eta\in(0,2)$. Then, the expected norm of the error at the $k$th iteration satisfies
\begin{equation}\label{eq:cov-2}
\scaleto{
\mathbb{E}\|X_k-X_*\|_F^2\leq \left(1-\eta(2-\eta) \phi
\sigma_{\min}^2(D_{_A}) \sigma_{\min}^2(D_{_B}) \sigma_{\min}^2(A) \sigma_{\min}^2(B) \right)^k\|X_0-X_*\|_F^2,}{13pt}
\end{equation}
where $\phi=\tfrac{u_{\min}^2v_{\min}^2}{u_{\max}^2v_{\max}^2\gamma_{\max}^2(A) \gamma_{\max}^2(B)}$, and
the block diagonal matrices $D_{_A}$ and $D_{_B}$ are shown in equation \eqref{eq:D-1}.
\end{theorem}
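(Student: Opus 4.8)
The plan is to start from the expansion \eqref{eq:unfold} of $\|X_{k+1}-X_*\|_F^2$ and to control its two variable terms separately. First I would record the algebraic fact that makes the averaged update tractable: writing $R_k=X_k-X_*$ and
\begin{equation*}
S_k=\sum_{i\in I_k,j\in J_k}u_iv_j\tfrac{A_{i,:}^TA_{i,:}R_kB_{:,j}B_{:,j}^T}{\|A_{i,:}\|^2\|B_{:,j}\|^2},
\end{equation*}
the double sum \emph{separates}, $S_k=M_AR_kM_B$, where $M_A=\tilde{A}_{I_k,:}^TW_{I_k}\tilde{A}_{I_k,:}$ and $M_B=\tilde{B}_{:,J_k}W_{J_k}\tilde{B}_{:,J_k}^T$ with the local diagonal weight matrices $W_{I_k}=\diag(u_i,\,i\in I_k)$ and $W_{J_k}=\diag(v_j,\,j\in J_k)$. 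Both $M_A$ and $M_B$ are symmetric positive semidefinite, and this factorization is the structural key to everything that follows.

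Next I would justify the stepsize by proving that $\alpha_*$ in \eqref{eq:size} is a uniform lower bound for $L_k$ from \eqref{Ieq:sizestep}. Vectorizing gives $\langle S_k,R_k\rangle_F={\rm vec}(R_k)^T(M_B\otimes M_A){\rm vec}(R_k)$ and $\|S_k\|_F^2={\rm vec}(R_k)^T(M_B\otimes M_A)^2{\rm vec}(R_k)$, so the inequality $x^THx\ge x^TH^2x/\lambda_{\max}(H)$ for the PSD matrix $H=M_B\otimes M_A$ (which holds since $\mu(\lambda_{\max}-\mu)\ge0$ for every eigenvalue $\mu$ of $H$) yields $L_k\ge 1/\big(\sigma_{\max}(M_A)\sigma_{\max}(M_B)\big)$. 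Combining $W_{I_k}\preceq u_{\max}I$ with the definition \eqref{eq: gamma} of $\gamma_{\max}(A)$ gives $\sigma_{\max}(M_A)\le u_{\max}\gamma_{\max}^2(A)$, and similarly $\sigma_{\max}(M_B)\le v_{\max}\gamma_{\max}^2(B)$; hence $L_k\ge 1/\big(u_{\max}v_{\max}\gamma_{\max}^2(A)\gamma_{\max}^2(B)\big)\ge\alpha_*$. With $\alpha=\eta\alpha_*$, substituting $\alpha_*\|S_k\|_F^2\le\langle S_k,R_k\rangle_F$ into \eqref{eq:unfold} collapses the two variable terms into $-\eta(2-\eta)\alpha_*\langle S_k,R_k\rangle_F$, which produces the characteristic factor $\eta(2-\eta)$.

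Then I would extract the rate by lower bounding $\langle S_k,R_k\rangle_F$. The complementary bounds $W_{I_k}\succeq u_{\min}I$ and $W_{J_k}\succeq v_{\min}I$ give $M_A\succeq u_{\min}\tilde{A}_{I_k,:}^T\tilde{A}_{I_k,:}$ and $M_B\succeq v_{\min}\tilde{B}_{:,J_k}\tilde{B}_{:,J_k}^T$, so by the cyclic property of the trace $\langle S_k,R_k\rangle_F\ge u_{\min}v_{\min}\|\tilde{A}_{I_k,:}R_k\tilde{B}_{:,J_k}\|_F^2$. Since $\alpha_*u_{\min}v_{\min}=\phi$, this leaves $\|X_{k+1}-X_*\|_F^2\le\|X_k-X_*\|_F^2-\eta(2-\eta)\phi\|\tilde{A}_{I_k,:}R_k\tilde{B}_{:,J_k}\|_F^2$. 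Taking the conditional expectation given $X_k$ and using the independence of the choices of $I_k$ and $J_k$ converts $\mathbb{E}\|\tilde{A}_{I_k,:}R_k\tilde{B}_{:,J_k}\|_F^2$ into $\|(D_{_A}A)R_k(BD_{_B})\|_F^2$ via the expectation identities for $\mathbb{E}[\tilde{A}_{I_k,:}^T\tilde{A}_{I_k,:}]$ and $\mathbb{E}[\tilde{B}_{:,J_k}\tilde{B}_{:,J_k}^T]$ established just before \eqref{eq:D-1}. From here the argument is exactly the tail of \Cref{Th:cov-1}: factor $(BD_{_B})^T\otimes(D_{_A}A)=(D_{_B}\otimes D_{_A})(B^T\otimes A)$, apply the minimum-nonzero-singular-value bound twice, and use $\sigma_{\min}(D_{_B}\otimes D_{_A})=\sigma_{\min}(D_{_A})\sigma_{\min}(D_{_B})$ to reach the factor $\sigma_{\min}^2(D_{_A})\sigma_{\min}^2(D_{_B})\sigma_{\min}^2(A)\sigma_{\min}^2(B)$; taking full expectations and inducting on $k$ gives \eqref{eq:cov-2}.

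I expect the main obstacle to be the stepsize step: spotting the factorization $S_k=M_AR_kM_B$ and then pinning down the uniform bound $\alpha_*\le L_k$, which requires both the Kronecker Rayleigh-quotient inequality and the spectral-norm estimates $\sigma_{\max}(M_A)\le u_{\max}\gamma_{\max}^2(A)$, $\sigma_{\max}(M_B)\le v_{\max}\gamma_{\max}^2(B)$. The one subtlety in the final step is the same as in \Cref{Th:cov-1}: the minimum-nonzero-singular-value bound is legitimate only because ${\rm vec}(R_k)$ remains in the range of $(B^T\otimes A)^T$, which holds since every update direction $S_k$ lies in that subspace and $X_*=A^\dag CB^\dag$ does as well.
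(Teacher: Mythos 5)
Your proposal is correct, and it reaches the paper's key per-iteration inequality $\|X_{k+1}-X_*\|_F^2\leq\|X_k-X_*\|_F^2-\eta(2-\eta)\phi\|\tilde{A}_{I_k,:}(X_k-X_*)\tilde{B}_{:,J_k}\|_F^2$ by a genuinely different route. The paper never introduces your weighted factorization $S_k=M_A(X_k-X_*)M_B$ with $M_A=\tilde{A}_{I_k,:}^TW_{I_k}\tilde{A}_{I_k,:}$, $M_B=\tilde{B}_{:,J_k}W_{J_k}\tilde{B}_{:,J_k}^T$: instead it bounds the cross term from below by $u_{\min}v_{\min}\|\tilde{A}_{I_k,:}(X_k-X_*)\tilde{B}_{:,J_k}\|_F^2$ (pulling the weights out of a sum of nonnegative terms) and the quadratic term from above by $u_{\max}^2v_{\max}^2\gamma_{\max}^2(A)\gamma_{\max}^2(B)\|\tilde{A}_{I_k,:}(X_k-X_*)\tilde{B}_{:,J_k}\|_F^2$, then \emph{derives} $\alpha_*$ by maximizing the resulting concave quadratic $2\alpha u_{\min}v_{\min}-\alpha^2u_{\max}^2v_{\max}^2\gamma_{\max}^2(A)\gamma_{\max}^2(B)$ in $\alpha$. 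You instead establish $L_k\geq 1/(\sigma_{\max}(M_A)\sigma_{\max}(M_B))\geq\alpha_*$ via the Rayleigh-quotient inequality for the PSD matrix $M_B\otimes M_A$ and use $\alpha_*\|S_k\|_F^2\leq\langle S_k,X_k-X_*\rangle_F$ to absorb the quadratic term into the cross term before lower-bounding the latter. Two remarks. First, your treatment of the quadratic term is actually tighter in justification than the paper's: the paper's step $\|\sum u_iv_jT_{ij}\|_F^2\leq u_{\max}^2v_{\max}^2\|\sum T_{ij}\|_F^2$ is asserted for a weighted sum of matrices that are not individually PSD-aligned, and as stated it does not follow from $u_i\leq u_{\max}$, $v_j\leq v_{\max}$ alone; your spectral bound $\|S_k\|_F^2\leq\sigma_{\max}(M_A)\sigma_{\max}(M_B)\langle S_k,X_k-X_*\rangle_F$ together with $\sigma_{\max}(M_A)\leq u_{\max}\gamma_{\max}^2(A)$ is airtight and yields the same constants. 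Second, your closing caveat that the bound $\sigma_{\min}^2(B^T\otimes A)$ applies only because ${\rm vec}(X_k-X_*)$ stays in the row space of $B^T\otimes A$ is a genuine point the paper passes over silently in both \Cref{Th:cov-1} and \Cref{Th:cov-2}; it requires $X_0-X_*$ to lie in that subspace (e.g.\ $X_0=0$), which is consistent with the paper's experimental setup but not stated in the theorem. The tail of your argument (conditional expectation, the identities for $\mathbb{E}[\tilde{A}_{I_k,:}^T\tilde{A}_{I_k,:}]$ and $\mathbb{E}[\tilde{B}_{:,J_k}\tilde{B}_{:,J_k}^T]$, the Kronecker singular-value factorization, and induction) coincides with the paper's.
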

\begin{proof}
Since
\begin{equation*}
\sum_{i\in I_k,j\in J_k}\tfrac{A_{i,:}^TA_{i,:}(X_k-X_*)B_{:,j}B_{:,j}^T}{\|A_{i,:}\|^2\|B_{:,j}\|^2}=
(\tilde{A}_{I_k,:})^T\tilde{A}_{I_k,:}(X_k-X_*)\tilde{B}_{:,J_k}(\tilde{B}_{:,J_k})^T.
\end{equation*}
For the second term of the equation \eqref{eq:unfold}, we have
\begin{eqnarray*}
&&\left\langle \sum_{i\in I_k,j\in J_k}u_iv_j \tfrac{A_{i,:}^TA_{i,:}(X_k-X_*)B_{:,j}B_{:,j}^T}{\|A_{i,:}\|^2\|B_{:,j}\|^2},  X_k-X_*\right\rangle_F \\
&&\geq u_{\min}v_{\min} \left\langle (\tilde{A}_{I_k,:})^T\tilde{A}_{I_k,:}(X_k-X_*) \tilde{B}_{:,J_k}(\tilde{B}_{:,J_k})^T,  X_k-X_*\right\rangle_F  \\
&&=u_{\min}v_{\min}\left\langle \tilde{A}_{I_k,:}(X_k-X_*) \tilde{B}_{:,J_k},
\tilde{A}_{I_k,:}(X_k-X_*)\tilde{B}_{:,J_k}\right\rangle_F\\
&&=u_{\min}v_{\min}\|\tilde{A}_{I_k,:}(X_k-X_*)\tilde{B}_{:,J_k}\|_F^2.
\end{eqnarray*}
For the third term of the equation \eqref{eq:unfold}, we have
\begin{eqnarray*}
&& \left\|\sum_{i\in I_k,j\in J_k}u_iv_j \tfrac{A_{i,:}^TA_{i,:}(X_k-X_*)B_{:,j}B_{:,j}^T}{\|A_{i,:}\|^2\|B_{:,j}\|^2}\right\|_F^2\\
&&\leq u_{\max}^2v_{\max}^2 \left\|(\tilde{A}_{I_k,:})^T\tilde{A}_{I_k,:}(X_k-X_*)\tilde{B}_{:,J_k}(\tilde{B}_{:,J_k})^T\right\|_F^2\\
&&=u_{\max}^2v_{\max}^2 \left\|[\tilde{B}_{:,J_k}\otimes (\tilde{A}_{I_k,:})^T]{\rm vec}[\tilde{A}_{I_k,:}(X_k-X_*)\tilde{B}_{:,J_k}]\right\|_2^2\\
&&\leq u_{\max}^2v_{\max}^2\sigma_{\max}^2\left(\tilde{B}_{:,J_k}\otimes (\tilde{A}_{I_k,:})^T\right)
\left\|{\rm vec}[\tilde{A}_{I_k,:}(X_k-X_*)\tilde{B}_{:,J_k}]\right\|_2^2\\
&&=u_{\max}^2v_{\max}^2\sigma_{\max}^2(\tilde{B}_{:,J_k})\sigma_{\max}^2(\tilde{A}_{I_k,:})\|\tilde{A}_{I_k,:} (X_k-X_*) \tilde{B}_{:,J_k}\|_F^2\\
&&\leq u_{\max}^2 v_{\max}^2 \gamma_{\max}^2(A) \gamma_{\max}^2(B) \|\tilde{A}_{I_k,:}(X_k-X_*)\tilde{B}_{:,J_k}\|_F^2.
\end{eqnarray*}
Hence
\begin{eqnarray*}
\|X_{k+1}-X_*\|_F^2&\leq&\|X_k-X_*\|_F^2-\big(2\alpha u_{\min}v_{\min}\\
&&-\alpha^2 u_{\max}^2 v_{\max}^2 \gamma_{\max}^2(A) \gamma_{\max}^2(B) \big) \|\tilde{A}_{I_k,:}(X_k-X_*)\tilde{B}_{:,J_k}\|_F^2.
\end{eqnarray*}
In order to ensure strictly decrease of the sequence $\{\|X_k-X_*\|_F^2\}_{k=0}^{\infty}$, we need
\begin{equation*}
2\alpha u_{\min}v_{\min}-\alpha^2 u_{\max}^2 v_{\max}^2 \gamma_{\max}^2(A) \gamma_{\max}^2(B)>0.
\end{equation*}
Hence, the stepsize
\begin{equation*}
0<\alpha<\tfrac{2u_{\min}v_{\min}}{u_{\max}^2v_{\max}^2\gamma_{\max}^2(A) \gamma_{\max}^2(B)}\leq2L_k,
\end{equation*}
and the optimal stepsize is obtained by maximizing
\begin{equation*}
2\alpha u_{\min}v_{\min}-\alpha^2 u_{\max}^2 v_{\max}^2 \gamma_{\max}^2(A) \gamma_{\max}^2(B)
\end{equation*}
with respect to $\alpha$,
which leads to $\alpha_*=\tfrac{u_{\min}v_{\min}}{u_{\max}^2v_{\max}^2\gamma_{\max}^2(A) \gamma_{\max}^2(B)}$.
Hence, taking stepsize $\alpha=\eta\alpha_*$ for some $\eta\in (0,2)$, we obtain
\begin{equation*}
\|X_{k+1}-X_*\|_F^2\leq\|X_k-X_*\|_F^2-\eta(2-\eta)\phi\|\tilde{A}_{I_k,:}(X_k-X_*)\tilde{B}_{:,J_k}\|_F^2.
\end{equation*}
Taking conditional expectations, we get
\begin{equation}\label{eq:p-1a}
\mathbb{E}[\|X_{k+1}-X_*\|_F^2|X_k]=\|X_k-X_*\|_F^2-\eta(2-\eta)\phi \mathbb{E}[\|\tilde{A}_{I_k,:}(X_k-X_*)\tilde{B}_{:,J_k}\|_F^2|X_k].
\end{equation}
We note that
\begin{eqnarray*}
&&\mathbb{E}[\|\tilde{A}_{I_k,:}(X_k-X_*)\tilde{B}_{:,J_k}\|_F^2|X_k]\\
&&=\left\langle \mathbb{E}[(\tilde{A}_{I_k,:})^T\tilde{A}_{I_k,:}](X_k-X_*),
(X_k-X_*)\mathbb{E}[\tilde{B}_{:,J_k}(\tilde{B}_{:,J_k})^T]\right\rangle_F\\
&&=\left\langle (D_{_A}A)^T(D_{_A}A)(X_k-X_*), (X_k-X_*)(BD_{_B})(BD_{_B})^T\right\rangle_F\\
&&=\|(D_{_A}A)(X_k-X_*)(BD_{_B})\|_F^2.
\end{eqnarray*}
Using the fact
\begin{eqnarray*}
\|(D_{_A}A)(X_k-X_*)(BD_{_B})\|_F^2&=&\|[(BD_{_B})^T\otimes (D_{_A}A)]{\rm vec}(X_k-X_*)\|_2^2\\
&=&\|(D_{_B}^T\otimes D_{_A})(B^T\otimes A){\rm vec}(X_k-X_*)\|_2^2\\
&\geq&\sigma_{\min}^2(D_{_B}^T\otimes D_{_A})\sigma_{\min}^2(B^T\otimes A)\|X_k-X_*\|_F^2\\
&=&\sigma_{\min}^2(D_{_A})\sigma_{\min}^2(A)\sigma_{\min}^2(D_{_B})\sigma_{\min}^2(B)\|X_k-X_*\|_F^2,
\end{eqnarray*}we have
\begin{equation}\label{eq:p-2a}
\scaleto{\mathbb{E}[\|\tilde{A}_{I_k,:}(X_k-X_*)\tilde{B}_{:,J_k}\|_F^2|X_k]\geq \sigma_{\min}^2(D_{_A}) \sigma_{\min}^2(A) \sigma_{\min}^2(D_{_B}) \sigma_{\min}^2(B) \|X_k-X_*\|_F^2.}{11pt}
\end{equation}
Thus, combining \eqref{eq:p-1a} and \eqref{eq:p-2a}, we can obtain an estimate an follows:
\begin{equation*}
\scaleto{
\mathbb{E}[\|X_{k+1}-X_*\|_F^2|X_k]\leq
\left(1-\eta(2-\eta)\phi \sigma_{\min}^2(D_{_A})\sigma_{\min}^2(A)\sigma_{\min}^2(D_{_B})\sigma_{\min}^2(B)\right)\|X_k-X_*\|_F^2.}{11pt}
\end{equation*}
Taking the full expectation of both sides, we conclude that
\begin{equation*}
\scaleto{
\mathbb{E}\|X_{k+1}-X_*\|_F^2\leq \left(1-\eta(2-\eta) \phi \sigma_{\min}^2(D_{_A})\sigma_{\min}^2(A)\sigma_{\min}^2(D_{_B})\sigma_{\min}^2(B) \right)\mathbb{E}\|X_k-X_*\|_F^2.}{11pt}
\end{equation*}
By induction, we complete the proof.
\end{proof}
\begin{remark}\label{Re:r-3}
Let $[m]=\{I_1, \cdots, I_s\}$ and $[n]=\{J_1, \cdots, J_t\}$ be partitions of  $[m]$ and $[n]$, respectively. The index sets $I_k\subseteq[m]$ and $J_k\subseteq[n]$ are selected according to probability distribution
\begin{equation*}
\mathbb{P}(I_k)=\tfrac{\|A_{I_k,:}\|_F^2}{\|A\|_F^2}~{\rm and}~\mathbb{P}(J_k)=\tfrac{\|B_{:,J_k}\|_F^2}{\|B\|_F^2},
\end{equation*}
respectively. In \Cref{alg:GRABK}, we take the weights $u_i^{k}=\tfrac{\|A_{i,:}\|_2^2}{\|A_{I_k,:}\|_F^2}$ and $v_j^{k}=\tfrac{\|B_{:,j}\|_2^2}{\|B_{:,J_k}\|_F^2}$ for all $k\geq0$, and the stepsize $\alpha=\eta \tfrac{1}{\beta_{\max}^2(A)\beta_{\max}^2(B)}$ for some $\eta\in(0,2)$. In this case, we have the following error estimate
\begin{equation}\label{eq:cov-2-1}
\mathbb{E}\|X_k-X_*\|_F^2\leq\left(1-\eta(2-\eta)\tfrac{\sigma_{\min}^2(A)}{\|A\|_F^2 \beta_{\max}^2(A)}\tfrac{\sigma_{\min}^2(B)}{\|B\|_F^2 \beta_{\max}^2(B)}\right)^k\|X_0-X_*\|_F^2,
\end{equation}
where  $\beta_{\max}(A)$ and $\beta_{\max}(B)$ are shown in equation \eqref{eq:beta}.
\end{remark}

\begin{remark}
By \Cref{Re:r-3}, we know that $0<\alpha<\tfrac{2}{\beta_{\max}^2(A)\beta_{\max}^2(B)}$ guarantees the convergence of the error $\mathbb{E}\|X_k-X_*\|_F^2$.  However, since the error estimate \eqref{eq:cov-2-1} usually is not sharp, the stepsize $\alpha$ satisfying $\tfrac{2}{\beta_{\max}^2(A)\beta_{\max}^2(B)}<\alpha<2\tfrac{\|A\|_F^2\|B\|_F^2}{\sigma_{\max}^2(A)\sigma_{\max}^2(B)}$ is also possible to result in convergence.
\end{remark}

\begin{remark}\label{Re:r-4}
Assume that the matrix $A$ is row normalized, i.e., $\|A_{i,:}\|=1$, and the matrix $B$ is column normalized, i.e., $\|B_{:,j}\|=1$. For index sets $I_k\subseteq[m]$ and $J_k\subseteq[n]$, the block sampling have the same size $|I_k|=\tau_1$ and $|J_k|=\tau_2$ for all $k\geq0$.
In this case, we have $\mathbb{P}(I_k)=\frac{\tau_1}{m}$ and $\mathbb{P}(J_k)=\frac{\tau_2}{n}$. Let us consider the particular choices $\eta=1$, the weights $u_i=\tfrac{1}{\tau_1}$ and $v_i=\tfrac{1}{\tau_2}$. Since
\begin{eqnarray*}
\gamma_{\max}^2(A)&=&\max_{I_k\subseteq [m]}\sigma_{\max}^2(A_{I_k,:})=\tau_1\beta_{\max}^2(A),\\
\gamma_{\max}^2(B)&=&\max_{J_k\subseteq [n]}\sigma_{\max}^2(B_{:,J_k})=\tau_2\beta_{\max}^2(B).
\end{eqnarray*}
Then, the convergence rate \eqref{eq:cov-2} and \eqref{eq:cov-2-1} become
\begin{equation}\label{eq:cov-2-2}
\mathbb{E}\|X_k-X_*\|_F^2\leq\left(1-\tfrac{\tau_1\tau_2}{\gamma_{\max}^2(A)\gamma_{\max}^2(B)}
\tfrac{\sigma_{\min}^2(A)}{m}\tfrac{\sigma_{\min}^2(B)}{n} \right)^k\|X_0-X_*\|_F^2,
\end{equation}
where $\gamma_{\max}(A)$ and $\gamma_{\max}(B)$ are shown in equation \eqref{eq: gamma}.
\end{remark}

Comparing \eqref{eq:cov-2-2} with the convergence rate \eqref{eq:cov-1-2}, since $\tfrac{\tau_1}{\gamma_{\max}^2(A)}$ and $\tfrac{\tau_2}{\gamma_{\max}^2(B)}$ greater than or equal to 1, we show the convergence factor of the GRABK method is smaller than that of the GRK method, which reveals that the GRABK method has a significant speed-up.

\subsection{The global randomized average block Kaczmarz algorithm with adaptive stepsize}
Since the GRABK method involved a stepsize $\alpha$ depending on the geometric properties of the $A, B$ and submatrices $A_{I_k,:}, B_{:, J_k}$, which may be difficult to compute in large-scale matrix equations. Next, we design a randomized average block Kaczmarz method with adaptive stepsize, which does not require the computation of $\gamma_{\max}^2(A)$, $\gamma_{\max}^2(B)$, $A_{I,:}^\dag$, and $B_{:,J}^\dag$. To simplify the notation, we define $\hat{u}_i^{k}=\frac{u_i^{k}}{\|A_{i,:}\|^2}$ and $\hat{v}_j^{k}=\frac{v_j^{k}}{\|B_{:,j}\|^2}$. Thus, the iterative formula \eqref{eq:it-4} becomes
\begin{equation*}
X_{k+1}=X_k+\alpha_k\left(\sum_{i\in I_k,j\in J_k}\hat{u}_i^{k}\hat{v}_j^{k}A_{i,:}^T(C_{ij}-A_{i,:}X_kB_{:,j})B_{:,j}^T\right).
\end{equation*}
By \eqref{Ieq:sizestep}, for each iteration, we consider the adaptive stepsize of the form
\begin{equation}\label{eq:stepsize}
\alpha_k=\eta L_k,~{\rm where}~L_k=\tfrac{\sum_{i\in I_k,j\in J_k}\hat{u}_i^{k}\hat{v}_j^{k}(C_{ij}-A_{i,:}X_kB_{:,j})^2}
{\left\|\sum_{i\in I_k,j\in J_k}\hat{u}_i^{k}\hat{v}_j^{k}A_{i,:}^T(C_{ij}-A_{i,:}X_kB_{:,j})B_{:,j}^T\right\|_F^2}
\end{equation}
for some $\eta\in(0,2)$. The convergence of the GRABK method with adaptive stepsize $\alpha_k$ is guaranteed by the \Cref{Th:cov-3}. The convergence rate of the GRABK method with adaptive stepsize $\alpha_k$ depends explicitly on the geometric properties of the matrices $A, B$ and submatrices $A_{I_k,:}, B_{:, J_k}$.
\begin{theorem}\label{Th:cov-3}
Let $X_*$ be the minimal Frobenius norm solution of $AXB=C$, and $X_k$ be the $k$th approximation of $X_*$ generated by the GRABK method with the weights $0<u_{\min}\leq u_i^{k}\leq u_{\max}<1$ for all $i\in [m]$, and $0<v_{\min}\leq v_j^{k}\leq v_{\max}<1$ for all $j\in [n]$, and the stepsize $\alpha_k=\eta L_k$ for some $\eta\in(0,2)$. Then, the expected norm of the error at the $k$th iteration satisfies
\begin{equation}\label{eq:cov-3}
\scaleto{
\mathbb{E}\|X_k-X_*\|_F^2\leq \left(1-\eta(2-\eta)\psi
 \sigma_{\min}^2(D_{_A}) \sigma_{\min}^2(D_{_B}) \sigma_{\min}^2(A) \sigma_{\min}^2(B) \right)^k\|X_0-X_*\|_F^2,}{13pt}
\end{equation}
where $\psi=\tfrac{u_{\min}v_{\min}}{u_{\max}v_{\max}\gamma_{\max}^2(A) \gamma_{\max}^2(B)}$, and
the block diagonal matrices $D_{_A}$ and $D_{_B}$ are shown in equation \eqref{eq:D-1}.
\end{theorem}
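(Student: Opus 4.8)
The plan is to mirror the structure of the proof of \Cref{Th:cov-2}, since the only real difference is that the stepsize $\alpha_k=\eta L_k$ is now adaptive rather than constant. I would start from the unfolded identity \eqref{eq:unfold}, which expresses $\|X_{k+1}-X_*\|_F^2$ in terms of an inner product term (linear in $\alpha_k$) and a squared-norm term (quadratic in $\alpha_k$). Substituting $\alpha_k=\eta L_k$ with $L_k$ defined by \eqref{eq:stepsize}, the key observation is that $L_k$ is exactly the ratio of the inner-product numerator to the squared-norm denominator appearing in \eqref{eq:unfold}. Indeed, using $C_{ij}=A_{i,:}X_*B_{:,j}$ one has $C_{ij}-A_{i,:}X_kB_{:,j}=-A_{i,:}(X_k-X_*)B_{:,j}$, so the numerator of $L_k$, namely $\sum_{i\in I_k,j\in J_k}\hat u_i^k\hat v_j^k(C_{ij}-A_{i,:}X_kB_{:,j})^2$, coincides with the inner-product term in \eqref{eq:unfold}, and the denominator coincides with the squared-norm term. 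This is the algebraic heart of the argument and I would verify it carefully first.

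\textbf{Second}, after this substitution the two middle-plus-last terms of \eqref{eq:unfold} collapse into a single clean expression. Writing $S_k=\left\|\sum_{i\in I_k,j\in J_k}\hat u_i^k\hat v_j^k A_{i,:}^T(C_{ij}-A_{i,:}X_kB_{:,j})B_{:,j}^T\right\|_F^2$ for the denominator and $N_k$ for the numerator of $L_k$, so that $L_k=N_k/S_k$, the per-step decrease becomes
\begin{equation*}
\|X_{k+1}-X_*\|_F^2=\|X_k-X_*\|_F^2-\left(2\alpha_k N_k-\alpha_k^2 S_k\right)=\|X_k-X_*\|_F^2-\eta(2-\eta)\tfrac{N_k^2}{S_k}.
\end{equation*}
This exhibits the familiar $\eta(2-\eta)$ factor and reduces everything to bounding $N_k^2/S_k$ from below.

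\textbf{Third}, I would lower-bound $N_k^2/S_k$ by $\psi\,\|\tilde A_{I_k,:}(X_k-X_*)\tilde B_{:,J_k}\|_F^2$, where $\psi=\tfrac{u_{\min}v_{\min}}{u_{\max}v_{\max}\gamma_{\max}^2(A)\gamma_{\max}^2(B)}$. For the numerator $N_k$, the weight bounds give $N_k\geq u_{\min}v_{\min}\|\tilde A_{I_k,:}(X_k-X_*)\tilde B_{:,J_k}\|_F^2$, exactly as in the inner-product estimate of \Cref{Th:cov-2}. For the denominator $S_k$, I would reuse the Kronecker/singular-value chain from \Cref{Th:cov-2} to get $S_k\leq u_{\max}v_{\max}\gamma_{\max}^2(A)\gamma_{\max}^2(B)\|\tilde A_{I_k,:}(X_k-X_*)\tilde B_{:,J_k}\|_F^2$ (note the different exponents on the weight factors here, which is precisely why $\psi$ differs from $\phi$ by a single power of $u_{\max}v_{\max}$). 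Combining these two estimates yields $N_k^2/S_k\geq\psi\,\|\tilde A_{I_k,:}(X_k-X_*)\tilde B_{:,J_k}\|_F^2$.

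\textbf{Finally}, the argument closes exactly as before: take conditional expectation, apply the already-established identity $\mathbb{E}[\|\tilde A_{I_k,:}(X_k-X_*)\tilde B_{:,J_k}\|_F^2\mid X_k]=\|(D_{_A}A)(X_k-X_*)(BD_{_B})\|_F^2$ together with the lower bound \eqref{eq:p-2a} in terms of $\sigma_{\min}^2(D_{_A})\sigma_{\min}^2(D_{_B})\sigma_{\min}^2(A)\sigma_{\min}^2(B)$, and then take full expectation and induct. \textbf{The main obstacle} I anticipate is the denominator bound on $S_k$: one must handle the weighted sum carefully, since factoring the weights out of the squared Frobenius norm of a sum is not as immediate as factoring them out of a single inner product, and getting the correct power of $u_{\max}v_{\max}$ (linear, not squared) is what distinguishes this adaptive case from the constant-stepsize case and produces $\psi$ rather than $\phi$.
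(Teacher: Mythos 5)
Your overall architecture---the identity $\|X_{k+1}-X_*\|_F^2=\|X_k-X_*\|_F^2-\eta(2-\eta)N_k^2/S_k$ obtained by substituting $\alpha_k=\eta L_k$ into \eqref{eq:unfold}, followed by conditional expectation and \eqref{eq:p-2a}---is exactly the paper's, and your first, second and fourth steps are sound. The gap is in the third step: the two separate estimates $N_k\geq u_{\min}v_{\min}T$ and $S_k\leq u_{\max}v_{\max}\gamma_{\max}^2(A)\gamma_{\max}^2(B)\,T$, with $T=\|\tilde A_{I_k,:}(X_k-X_*)\tilde B_{:,J_k}\|_F^2$, do not combine to give $N_k^2/S_k\geq\psi\,T$. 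They give
\begin{equation*}
\frac{N_k^2}{S_k}\;\geq\;\frac{\bigl(u_{\min}v_{\min}T\bigr)^2}{u_{\max}v_{\max}\gamma_{\max}^2(A)\gamma_{\max}^2(B)\,T}\;=\;u_{\min}v_{\min}\,\psi\,T,
\end{equation*}
which falls short of the claimed rate by a factor $u_{\min}v_{\min}<1$: because $N_k$ enters squared, you pay the numerator weight bound twice. (Your stated bound on $S_k$ is true, but only because $u_{\max}v_{\max}<1$; the direct singular-value chain gives the quadratic power $u_{\max}^2v_{\max}^2$ exactly as in \Cref{Th:cov-2}, so nothing is gained by asserting the linear power there.)

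The paper avoids this by never comparing $S_k$ to $T$. Setting $\hat A_{i,:}=(\hat u_i^{k})^{1/2}A_{i,:}$ and $\hat B_{:,j}=B_{:,j}(\hat v_j^{k})^{1/2}$, one has $N_k=\|\hat A_{I_k,:}(X_k-X_*)\hat B_{:,J_k}\|_F^2$ and
\begin{equation*}
S_k=\bigl\|(\hat A_{I_k,:})^T\hat A_{I_k,:}(X_k-X_*)\hat B_{:,J_k}(\hat B_{:,J_k})^T\bigr\|_F^2\leq\sigma_{\max}^2(\hat A_{I_k,:})\,\sigma_{\max}^2(\hat B_{:,J_k})\,N_k,
\end{equation*}
so that $L_k=N_k/S_k\geq 1/\bigl(u_{\max}v_{\max}\gamma_{\max}^2(A)\gamma_{\max}^2(B)\bigr)$ with the weight maxima entering only linearly; only after this is the single remaining factor bounded via $N_k\geq u_{\min}v_{\min}T$, yielding $L_kN_k\geq\psi\,T$. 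In short: bound the denominator $S_k$ relative to the numerator $N_k$ (both expressed through the hatted matrices), not relative to $T$. With that one change your argument closes exactly as you describe.
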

\begin{proof}
Let $\hat{A}_{i,:}=(\hat{u}_i^{k})^\frac{1}{2}A_{i,:}$ and $\hat{B}_{:,j}=B_{:,j}(\hat{v}_j^{k})^\frac{1}{2}$. Hence
\begin{equation*}
X_{k+1}=X_k+\alpha_k\left(\sum_{i\in I_k,j\in J_k}(\hat{A}_{i,:})^T \hat{A}_{i,:} (X_*-X_k) \hat{B}_{:,j}(\hat{B}_{:,j})^T\right),
\end{equation*}
and
\begin{equation*}
L_k=\tfrac{\sum_{i\in I_k,j\in J_k}[\hat{A}_{i,:}(X_k-X_*)\hat{B}_{:,j}]^2}
{\left\|\sum_{i\in I_k,j\in J_k}(\hat{A}_{i,:})^T \hat{A}_{i,:}(X_*-X_k)\hat{B}_{:,j}(\hat{B}_{:,j})^T\right\|_F^2}.
\end{equation*}
Using that
\begin{equation*}
\left\langle X_k-X_*, (\hat{A}_{i,:})^T \hat{A}_{i,:} (X_*-X_k) \hat{B}_{:,j}(\hat{B}_{:,j})^T\right\rangle_F
=-[\hat{A}_{i,:}(X_k-X_*)\hat{B}_{:,j}]^2,
\end{equation*}
we get
\begin{eqnarray*}
\|X_{k+1}-X_*\|_F^2&=&\|X_k-X_*\|_F^2-2\alpha_k \sum_{i\in I_k,j\in J_k}[\hat{A}_{i,:}(X_k-X_*)\hat{B}_{:,j}]^2 \\
&&+\alpha_k^2\left\|\sum_{i\in I_k,j\in J_k}(\hat{A}_{i,:})^T \hat{A}_{i,:} (X_*-X_k) \hat{B}_{:,j}(\hat{B}_{:,j})^T\right\|_F^2\\
&=&\|X_k-X_*\|_F^2-2\eta L_k\sum_{i\in I_k,j\in J_k}[\hat{A}_{i,:}(X_k-X_*)\hat{B}_{:,j}]^2\\
&&+\eta^2L_k^2\left\|\sum_{i\in I_k,j\in J_k}(\hat{A}_{i,:})^T \hat{A}_{i,:} (X_*-X_k) \hat{B}_{:,j}(\hat{B}_{:,j})^T\right\|_F^2\\
&=&\|X_k-X_*\|_F^2-\eta(2-\eta)L_k\sum_{i\in I_k,j\in J_k}[\hat{A}_{i,:}(X_k-X_*)\hat{B}_{:,j}]^2.
\end{eqnarray*}
Since
\begin{eqnarray*}
&&\left\|\sum_{i\in I_k,j\in J_k}(\hat{A}_{i,:})^T \hat{A}_{i,:}(X_*-X_k)\hat{B}_{:,j}(\hat{B}_{:,j})^T\right\|_F^2\\
&&=\left\|(\hat{A}_{I_k,:})^T\hat{A}_{I_k,:} (X_k-X_*) \hat{B}_{:,J_k}(\hat{B}_{:,J_k})^T\right\|_F^2 \\
&&=\left\|[\hat{B}_{:,J_k}\otimes(\hat{A}_{I_k,:})^T]{\rm vec}[\hat{A}_{I_k,:} (X_k-X_*) \hat{B}_{:,J_k}]\right\|_2^2 \\
&&\leq \sigma_{\max}^2\left(\hat{B}_{:,J_k}\otimes(\hat{A}_{I_k,:})^T\right) \left\|\hat{A}_{I_k,:} (X_k-X_*) \hat{B}_{:,J_k}\right\|_F^2 \\
&&\leq \sigma_{\max}^2(\hat{A}_{I_k,:})\sigma_{\max}^2(\hat{B}_{:,J_k}) \left\|\hat{A}_{I_k,:} (X_k-X_*) \hat{B}_{:,J_k}\right\|_F^2,
\end{eqnarray*}
where $\hat{A}_{I_k,:}=U_{I_k}\tilde{A}_{I_k,:}$ and $\hat{B}_{:,J_k}=\tilde{B}_{:,J_k}V_{J_k}$,
and the diagonal matrices $U_{I_k}=\diag ((u_i^{k})^{\frac{1}{2}},~i\in I_k)$ and $V_{J_k}=\diag((v_j^{k})^{\frac{1}{2}},~j\in J_k)$, $D_{I_k}$ and $D_{J_k}$ are shown in equation \eqref{eq:D}. In addition
\begin{equation*}
\sum_{i\in I_k,j\in J_k}[\hat{A}_{i,:}(X_k-X_*)\hat{B}_{:,j}]^2=\left\|\hat{A}_{I_k,:} (X_k-X_*) \hat{B}_{:,J_k} \right\|_F^2.
\end{equation*}
Furthermore
\begin{equation*}
\sigma_{\max}^2(\hat{A}_{I_k,:})\leq u_{\max}\sigma_{\max}^2(\tilde{A}_{I_k,:})\leq u_{\max}\gamma_{\max}^2(A)
\end{equation*}
and
\begin{equation*}
\sigma_{\max}^2(\hat{B}_{:,J_k})\leq v_{\max}\sigma_{\max}^2(\tilde{B}_{:,J_k})\leq v_{\max}\gamma_{\max}^2(B).
\end{equation*}
Hence
\begin{equation*}
L_k\geq\tfrac{1}{\sigma_{\max}^2(\hat{A}_{I_k,:})\sigma_{\max}^2(\hat{B}_{:,J_k})} \geq\tfrac{1}{u_{\max}v_{\max}\gamma_{\max}^2(A)\gamma_{\max}^2(B)}.
\end{equation*}
Therefore
\begin{eqnarray*}
\|X_{k+1}-X_*\|_F^2&\leq&\|X_k-X_*\|_F^2-\tfrac{\eta(2-\eta)}{u_{\max}v_{\max}\gamma_{\max}^2(A)\gamma_{\max}^2(B)} \left\|\hat{A}_{I_k,:} (X_k-X_*) \hat{B}_{:,J_k} \right\|_F^2\\
&\leq&\|X_k-X_*\|_F^2-\tfrac{\eta(2-\eta)u_{\min}v_{\min}}{u_{\max}v_{\max}\gamma_{\max}^2(A)\gamma_{\max}^2(B)} \left\|\tilde{A}_{I_k,:} (X_k-X_*) \tilde{B}_{:,J_k} \right\|_F^2.
\end{eqnarray*}
Taking the conditional expectation and using \eqref{eq:p-2a}, we get
\begin{equation*}
\scaleto{
\mathbb{E}[\|X_{k+1}-X_*\|_F^2|X_k] \leq
\left(1-\eta(2-\eta)\psi\sigma_{\min}^2(D_{_A})\sigma_{\min}^2(A)\sigma_{\min}^2(D_{_B})\sigma_{\min}^2(B)\right)\|X_k-X_*\|_F^2.}{11pt}
\end{equation*}
Taking the full expectation of both sides, we conclude that
\begin{equation*}
\scaleto{
\mathbb{E}\|X_{k+1}-X_*\|_F^2\leq
\left(1-\eta(2-\eta)\psi \sigma_{\min}^2(D_{_A})\sigma_{\min}^2(A)\sigma_{\min}^2(D_{_B})\sigma_{\min}^2(B)\right)\mathbb{E}\|X_k-X_*\|_F^2.}{11pt}
\end{equation*}
By induction, we complete the proof.
\end{proof}
\begin{remark}\label{Re:r-5}
Under the conditions of \Cref{Re:r-3}, we take the adaptive stepsize $\alpha_k=\eta L_k$ for some $\eta\in(0,2)$ and the weights $u_i^{k}=\tfrac{\|A_{i,:}\|_2^2}{\|A_{I_k,:}\|_F^2}$ and $v_j^{k}=\tfrac{\|B_{:,j}\|_2^2}{\|B_{:,J_k}\|_F^2}$ for all $k\geq0$ in \Cref{alg:GRABK}. In this case, $L_k\geq\tfrac{1}{\beta_{\max}^2(A)\beta_{\max}^2(B)}$, we have the following error estimate
\begin{equation}\label{eq:cov-3-1}
\mathbb{E}\|X_k-X_*\|_F^2\leq\left(1-\eta(2-\eta)\tfrac{\sigma_{\min}^2(A)}{\|A\|_F^2 \beta_{\max}^2(A)}\tfrac{\sigma_{\min}^2(B)}{\|B\|_F^2 \beta_{\max}^2(B)}\right)^k\|X_0-X_*\|_F^2.
\end{equation}
\end{remark}
\begin{remark}Under the assumptions and conditions of \Cref{Re:r-4}, the convergence rate \eqref{eq:cov-3} becomes
\begin{equation}\label{eq:cov-3-2}
\mathbb{E}\|X_k-X_*\|_F^2\leq\left(1-\tfrac{\tau_1\tau_2}{\gamma_{\max}^2(A)\gamma_{\max}^2(B)}
\tfrac{\sigma_{\min}^2(A)}{m}\tfrac{\sigma_{\min}^2(B)}{n} \right)^k\|X_0-X_*\|_F^2.
\end{equation}
We observe that this convergence rate is the same as \eqref{eq:cov-2-2}. However, the \Cref{alg:GRABK} with adaptive stepsize has more chances to accelerate.
\end{remark}

\begin{remark}
Let us consider the particular choices $\eta=1$, the convergence rate \eqref{eq:cov-2-1}and \eqref{eq:cov-3-1}become \eqref{eq:cov-1-1}, this implies that the GRBK and GRABK methods have the same convergence rate.  However, for solving large-scale matrix equations, the GRABK method can run in parallel and does not need to compute pseudoinverses. As a result, the GRABK method requires fewer CPU times than the GRBK method.
\end{remark}

There is a tight connection between the constant stepsize \eqref{eq:size} and the adaptive stepsize \eqref{eq:stepsize}.
In the proofs of \Cref{Th:cov-2,Th:cov-3}, the lower bounds of $L_k$ are given respectively. Since $\tfrac{u_{\min}}{u_{\max}}\leq 1$ and $\tfrac{v_{\min}}{v_{\max}} \leq 1$, it holds that
\begin{equation*}
L_k\geq\tfrac{1}{u_{\max}v_{\max}\gamma_{\max}^2(A)\gamma_{\max}^2(B)}\geq \tfrac{u_{\min}v_{\min}}{u_{\max}^2v_{\max}^2\gamma_{\max}^2(A)\gamma_{\max}^2(B)}=\alpha_*.
\end{equation*}
Hence, the adaptive stepsize \eqref{eq:stepsize} can be viewed as a practical approximation of the constant stepsize \eqref{eq:size}. However, the GRABK method with adaptive stepsize \eqref{eq:size} has more chances to accelerate, since the adaptive stepsize is, in general, larger than the constant stepsize counterpart.

\section{Numerical results}\label{Sec:NR}
In this section, we investigate the computational behavior of GRABK for solving various matrix equations, and compare GRABK with RK \cite{Strohmer2009}, i.e., the randomized Kaczmarz method is applied to the linear system $(B^T \otimes A) {\rm vec}(X) = {\rm vec} (C)$, and RBCD \cite{Du22} methods. All experiments are carried out using MATLAB (version R2017b) on a personal computer with 1.60 GHz central processing unit (Intel(R) Core(TM) i5-8265U CPU), 8.00 GB memory, and Windows operating system (64 bit Windows 10). We divided our tests into three broad categories: synthetic dense data, real-world sparse data, and an application to image restoration.

To construct a matrix equation, we set $C=AXB$, where $X$ is a random matrix with entries generated from a standard normal distribution. All computations are started from the initial guess $X_0 = 0$, and terminated once the \emph{relative error} (RE) of the solution, defined by
\begin{equation*}
{\rm RE}=\tfrac{\|X_k-X_*\|_F^2}{\|X_*\|_F^2}
\end{equation*}
at the current iterate $x_k$, satisfies $\text{RE} < 10^{-6}$ or exceeded maximum iteration, where $X_*=A^{\dag}CB^{\dag}$. We report the average number of iterations (IT) and the average CPU times in seconds (CPU) for 10 times repeated runs. We consider the following GRBK and GRABK variants:
\begin{itemize}
    \item GRBK with partition sampling and as in \Cref{Re:r-1}.
    \item GRK: GRBK with the block index sets size $|I_k | = |J_k | = 1$ as in \Cref{Re:r-2}.
	\item GRABK-c: GRABK with partition sampling and constant stepsize $\alpha=\tfrac{\eta}{\beta_{\max}^2(A)\beta_{\max}^2(B)}$ for some $\eta\in(0,2)$ as in \Cref{Re:r-3}.
	\item GRABK-a: GRABK with partition sampling and adaptive stepsize $\alpha_k=\eta L_k$ for some $\eta\in(0,2)$ as in \Cref{Re:r-5}.
\end{itemize}

For the block methods, we assume that $[m]=\{I_1, \cdots, I_s\}$ and $[n]=\{J_1, \cdots, J_t\}$ respectively are partitions of  $[m]$ and $[n]$, and the block sampling have the same size $|I_k|=\tau_1$ and $|J_k|=\tau_2$, where
\begin{equation*}
I_i=
\begin{cases}
\{(i-1)\tau_1+1, (i-1)\tau_1+2, \cdots, i\tau_1\}, &i=1,2,\cdots,s-1, \\
\{(s-1)\tau_1+1, (s-1)\tau_1+2, \cdots, m\}, &i=s,
\end{cases}
\end{equation*}
and
\begin{equation*}
J_j=
\begin{cases}
\{(j-1)\tau_2+1, (j-1)\tau_2+2, \cdots, j\tau_2\}, &j=1,2,\cdots,t-1, \\
\{(t-1)\tau_2+1, (t-1)\tau_2+2, \cdots, n\}, &j=t.
\end{cases}
\end{equation*}
\subsection{Synthetic dense data}
Synthetic dense data for this test is generated as follows:
\begin{itemize}
  \item Type I: For given $m,p$, and $r_1={\rm rank}(A)$, we construct a matrix $A$ by
  \begin{equation*}
  A=U_1D_1V_1^T,
  \end{equation*}
  where $U_1\in \mathbb{R}^{m\times r_1}$ and $V_1\in \mathbb{R}^{p\times r_1}$ are orthogonal columns matrices. The entries of $U_1$ and $V_1$ are generated from a standard normal distribution, and then columns are orthogonalization, i.e.,
  \begin{equation*}
  [U_1,\sim]={\rm qr(randn}(m,r_1),0),[V_1,\sim]={\rm qr(randn}(p,r_1),0).
  \end{equation*}
  The matrix $D_1$ is an $r_1\times r_1$ diagonal matrix whose diagonal entries are uniformly distribution numbers in $(1,2)$, i.e.,
  \begin{equation*}
  D_1=\diag(1+{\rm rand}(r_1,1)).
  \end{equation*}
  Similarly, for given $q,n$, and $r_2={\rm rank}(B)$, we construct a matrix $B$ by
  \begin{equation*}
  B=U_2D_2V_2^T,
  \end{equation*}
  where $U_2\in \mathbb{R}^{q\times r_2}$ and $V_1\in \mathbb{R}^{n\times r_2}$ are orthogonal columns matrices, and the matrix $D_2$ is an $r_2\times r_2$ diagonal matrix.
  \item Type II: For given $m,p,q,n$, the entries of $A$ and $B$ are generated from a standard normal distribution, i.e.,
  \begin{equation*}
  A={\rm randn}(m,p),~B={\rm randn}(q,n).
  \end{equation*}
\end{itemize}

\begin{figure}[!htb]
	\centering
	\subfigure[Type I]{\includegraphics[width=5in]{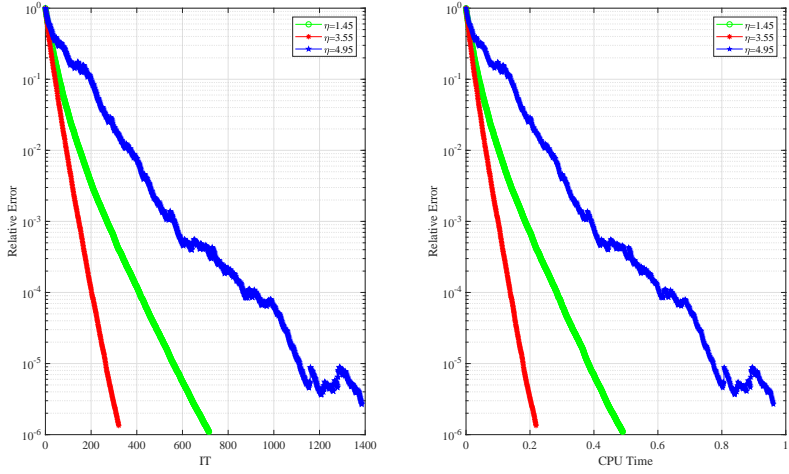}}
	\subfigure[Type II]{\includegraphics[width=5in]{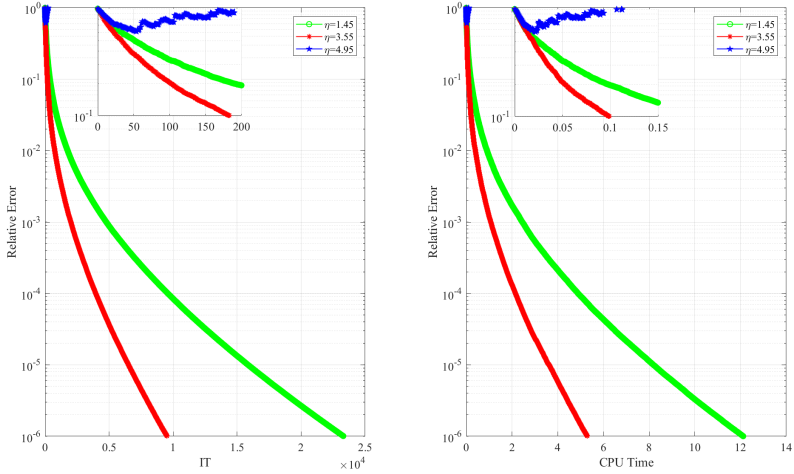}}
	\caption{The relative error of GRABK-c with block size $\tau_1=\tau_2=50$ and different stepsizes $\alpha=\tfrac{\eta}{\beta_{\max}^2(A)\beta_{\max}^2(B)}$ for two matrix equation.}
	\label{fig1}
\end{figure}

\begin{figure}[!htb]
	\centering
	\subfigure[Type I]{\includegraphics[width=5in]{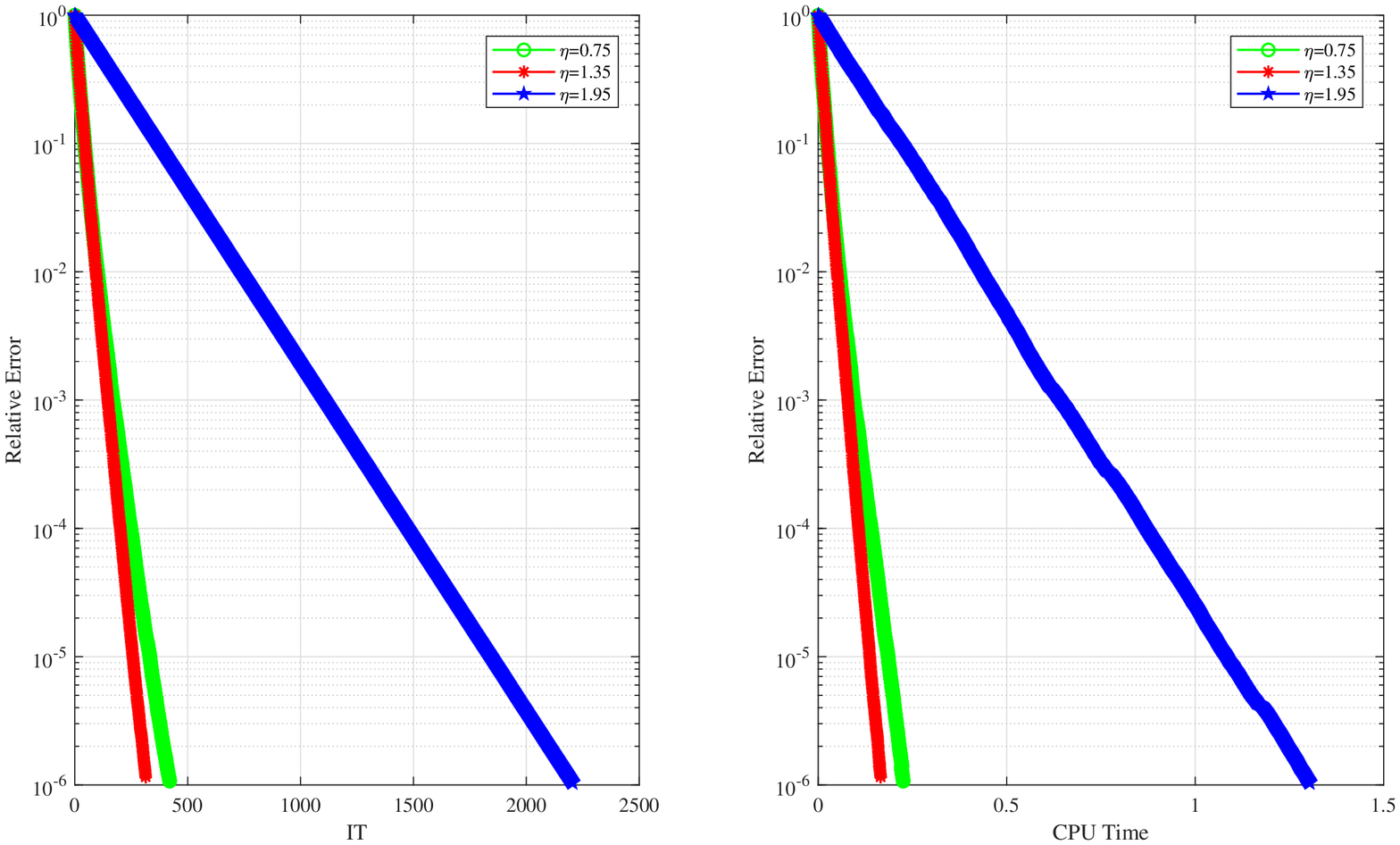}}
	\subfigure[Type II]{\includegraphics[width=5in]{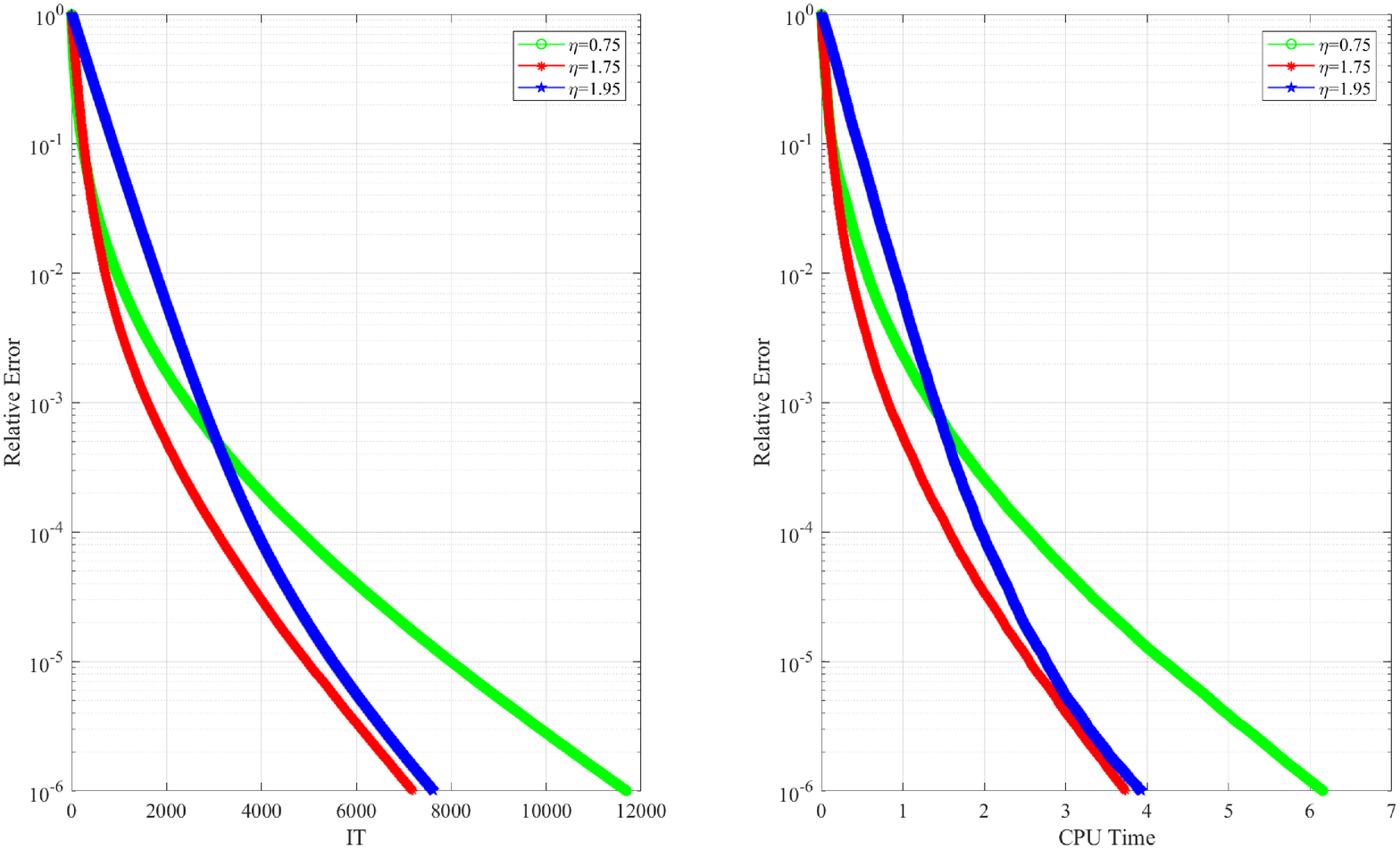}}
	\caption{The relative error of GRABK-a with block size $\tau_1=\tau_2=50$ and different stepsizes $\alpha_k=\eta L_k$ for two matrix equation.}
	\label{fig2}
\end{figure}

In \Cref{fig1}, we plot the relative error of GRABK-c with a fixed block size $\tau_1=\tau_2=50$ and different stepsizes $\alpha=\tfrac{\eta}{\beta_{\max}^2(A)\beta_{\max}^2(B)}$ for two matrix equation with Type I ($A=U_1D_1V_1^T$ with $m=500, p=250,r_1=150$ and $B=U_2D_2V_2^T$ with $n=500, q=250, r_2=150$) and Type II ($A={\rm randn}(500,250)$ and $B={\rm randn}(250,500)$).   Similarly, in \Cref{fig2}, we plot the relative error of GRABK-a with a fixed block size $\tau_1=\tau_2=50$ and different stepsizes $\alpha=\eta L_k$ for two matrix equation with Type I ($A=U_1D_1V_1^T$ with $m=500, p=250,r_1=150$ and $B=U_2D_2V_2^T$ with $n=500, q=250, r_2=150$) and Type II ($A={\rm randn}(500,250)$ and $B={\rm randn}(250,500)$). From \Cref{fig1,fig2}, we observed that the convergence rate of the GRABK-c and GRABK-a becomes faster as the stepsize increases, and then becomes slower after reaching the fastest rate, or even does not converge. Hence, in order to ensure convergence and weigh the convergence rate, we choose the stepsize $\alpha_k=\tfrac{1.95}{\beta_{\max}^2(A)\beta_{\max}^2(B)}$ in GRABK-c and $\alpha=L_k$ in GRABK-a.

\begin{table}[!htb]
\centering
	\caption{The average IT and CPU of RK, GRK, RBCD, GRBK, GRABK-c, and GRABK-a for matrix equations with Type I.}
	\scalebox{0.6}{
\begin{tabular}{lllllllllllllll}
\toprule
$m$    & $p$    & $r_1$& $\tau_1$ & $q$    & $n$    & $r_2$ & $\tau_2$ &     & RK      & GRK     & RBCD     & GRBK    & GRABK-c & GRABK-a \\
\midrule
50   & 20   & 10         & 10   & 20   & 50   & 20         & 10   & IT  & 5386.6  & 5186.3  & 473.1    & 26.0    & 185.7    & 123.9    \\
     &      &            &      &      &      &            &      & CPU & 0.2052  & 0.1430  & 0.0168   & 0.0047  & 0.0071   & 0.0053   \\
\midrule
50   & 20   & 10         & 10   & 20   & 50   & 10         & 5    & IT  & 2275.6  & 2353.4  & -        & 21.3    & 206.1    & 108.3    \\
     &      &            &      &      &      &            &      & CPU & 0.0842  & 0.0631  & -       & 0.0029  & 0.0072   & 0.0038   \\
\midrule
100  & 40   & 20         & 20   & 40   & 100  & 40         & 20   & IT  & 19789.9 & 19764.0 & 1045.2   & 24.3    & 216.9    & 131.0    \\
     &      &            &      &      &      &            &      & CPU & 2.8006  & 0.5695  & 0.0753   & 0.0094  & 0.0111   & 0.0072   \\
\midrule
100  & 40   & 20         & 20   & 40   & 100  & 20         & 10   & IT  & 9289.4  & 9577.2  & -        & 23.5    & 213.8    & 118.3    \\
     &      &            &      &      &      &            &      & CPU & 1.3268  & 0.2785  & -        & 0.0066  & 0.0106   & 0.0057   \\
\midrule
500  & 100  & 50         & 100  & 40   & 100  & 40         & 20   & IT  & 48986.7 & 48786.7 & 1169.8   & 28.4    & 178.3    & 91.0     \\
     &      &            &      &      &      &            &      & CPU & 28.6252 & 1.9705  & 0.4446   & 0.0435  & 0.0286   & 0.0127   \\
\midrule
500  & 100  & 50         & 100  & 40   & 100  & 20         & 10   & IT  & 24653.3 & 24326.1 & -        & 25.7    & 171.0    & 92.9     \\
     &      &            &      &      &      &            &      & CPU & 13.9705 & 1.0288  & -        & 0.0345  & 0.0274   & 0.0121   \\
\midrule
500  & 100  & 50         & 100  & 100  & 500  & 100        & 50   & IT  &$>$       &$>$       & 2569.3   & 24.7    & 172.6    & 88.5     \\
     &      &            &      &      &      &            &      & CPU &$>$       &$>$       & 4.49404 & 0.0632 & 0.04739 & 0.02415 \\
\midrule
500  & 100  & 50         & 100  & 100  & 500  & 50         & 25   & IT  & $>$       & $>$& -        & 23.0    & 197.7    & 89.9     \\
     &      &            &      &      &      &            &      & CPU & $>$       & $>$       & -        & 0.0429  & 0.0461   & 0.0200   \\
\midrule
1000 & 200  & 100        & 200  & 100  & 500  & 100        & 50   & IT  &$>$       & $>$       & 2348.1   & 24.4    & 162.7    & 87.4     \\
     &      &            &      &      &      &            &      & CPU &$>$       &$>$       & 13.4265  & 0.1847  & 0.1071   & 0.0538   \\
\midrule
1000 & 200  & 100        & 200  & 100  & 500  & 50         & 25   & IT  & $>$       &$>$       &-        & 23.6    & 217.0    & 93.7     \\
     &      &            &      &      &      &            &      & CPU & $>$       & $>$       & -        & 0.1454  & 0.1048   & 0.0453   \\
\midrule
1000 & 200  & 100        & 200  & 200  & 1000 & 200        & 100  & IT  & $>$        & $>$        & 5030.0   & 24.4    & 178.8    & 89.7     \\
     &      &            &      &      &      &            &      & CPU & $>$       & $>$       & 53.9234  & 0.2553  & 0.2133   & 0.1048   \\
\midrule
1000 & 200  & 100        & 200  & 200  & 1000 & 100        & 50   & IT  & $>$        &$>$        & -        & 24.3    & 206.3    & 96.3     \\
     &      &            &      &      &      &            &      & CPU & $>$       &$>$       & -        & 0.1701  & 0.1779   & 0.0753   \\
\midrule
2000 & 400  & 200        & 400  & 400  & 2000 & 400        & 200  & IT  & $>$       &$>$       &$>$         & 23.7    & 195.5    & 95.1     \\
     &      &            &      &      &      &            &      & CPU & $>$       & $>$       & $>$        & 0.9242  & 1.4372   & 0.6825   \\
\midrule
2000 & 400  & 200        & 400  & 400  & 2000 & 200        & 100  & IT  & $>$       & $>$       & -        & 23.7    & 216.2    & 98.2     \\
     &      &            &      &      &      &            &      & CPU & $>$       & $>$       & -        & 0.7470  & 1.2880   & 0.5734   \\
\midrule
5000 & 1000 & 750        & 500  & 1000 & 5000 & 1000       & 500  & IT  & $>$       &$>$       &$>$        & 52.6    & 318.7    & 163.6    \\
     &      &            &      &      &      &            &      & CPU & $>$      & $>$       &$>$        & 13.2482 & 20.7552  & 10.2175  \\
\midrule
5000 & 1000 & 750        & 500  & 1000 & 5000 & 750        & 500  & IT  & $>$       & $>$       & -        & 35.3    & 279.2    & 134.5    \\
     &      &            &      &      &      &            &      & CPU & $>$       & $>$       & -        & 8.6873  & 17.3383  & 8.1562\\
\bottomrule
\end{tabular}}
\label{tab1}
\end{table}

\begin{table}[!htb]
\centering
	\caption{The average IT and CPU of RK, GRK, RBCD, GRBK, GRABK-c, and GRABK-a for matrix equations with Type II.}
	\scalebox{0.7}{
\begin{tabular}{lllllllllllll}
\toprule
$m$    & $p$   & $\tau_1$ & $q$    & $n$    & $\tau_2$ &     & RK      & GRK     & RBCD     & GRBK    & GRABK-c & GRABK-a \\
\midrule
50   & 20   & 10  & 20   & 50   & 10  & IT  & 45699.0 & 45196.0 & 3909.1   & 177.3   & 1574.0  & 700.7   \\
     &      &     &      &      &     & CPU & 1.6867  & 1.1436  & 0.1296   & 0.0237  & 0.0532  & 0.0252  \\
\midrule
100  & 40   & 20  & 40   & 100  & 20  & IT  & $>$        & $>$        & 9843.7   & 248.0   & 2855.7  & 1075.9  \\
     &      &     &      &      &     & CPU & $>$        & $>$        & 0.7973   & 0.1001  & 0.1465  & 0.0547  \\
\midrule
500  & 100  & 100 & 40   & 100  & 20  & IT  & $>$        &$>$        & 3911.6   & 43.8    & 869.6   & 358.2   \\
     &      &     &      &      &     & CPU &$>$        & $>$        & 1.1979   & 0.0873  & 0.1466  & 0.0491  \\
\midrule
500  & 100  & 100 & 100  & 500  & 50  & IT  & $>$        & $>$        & 4525.0     & 26.8    & 383.9   & 175.0     \\
     &      &     &      &      &     & CPU & $>$        & $>$        & 8.4209   & 0.0790  & 0.1067  & 0.0497  \\
\midrule
1000 & 200  & 200 & 100  & 500  & 50  & IT  & $>$        & $>$        & 4988.4   & 28.7    & 429.1   & 188.4   \\
     &      &     &      &      &     & CPU &$>$        &$>$        & 30.0199  & 0.2781  & 0.2926  & 0.1276  \\
\midrule
1000 & 200  & 200 & 200  & 1000 & 100 & IT  & $>$        & $>$        & 9893.0   & 27.3    & 415.1   & 182.2   \\
     &      &     &      &      &     & CPU & $>$        & $>$        & 110.7012 & 0.3463  & 0.5131  & 0.2240  \\
\midrule
5000 & 1000 & 500 & 1000 & 5000 & 500 & IT  & $>$        &$>$     & $>$         & 99.2    & 614.5   & 285.8   \\
     &      &     &      &      &     & CPU & $>$      & $>$        &$>$         & 25.8886 & 40.5165 & 18.2868\\
\midrule
\end{tabular}}
\label{tab2}
\end{table}

In \Cref{tab1,tab2}, we report the average IT and CPU of RK, GRK, RBCD, GRBK, GRABK-c, and GRABK-a for solving matrix equations with Type I and Type II matrices. For the RBCD \cite{Du22}, we use the same parameters as in reference \cite{Du22}. For the GRBK, GRABK\_c, and GRABK\_a, we use the same block partition and block size. For the GRABK-c, and GRABK-a, the stepsizes $\alpha=\tfrac{1.95}{\beta_{\max}^2(A)\beta_{\max}^2(B)}$ and $\alpha_k=L_k$ are used, respectively. In the following tables, the item `$>$' represents that the number of iteration steps exceeds the maximum iteration (50000 or 120s). The item `-' represents that the method does not converge. From these two tables, we can see the following phenomena.

The GRK method is better than the RK method in terms of  IT and CPU time. The IT and CPU time of both the RK and GRK methods increases with the increase of matrix dimension. However, the GRK method has a small increase in terms of CPU time.

The GRBK, GRABK-c, and GRABK-a methods vastly outperform the RBCD method in terms of  IT and CPU time. The RBCD method does not converge in the case that the matrix $B$ is not full row rank. When the matrix size is small, the GRBK method is competitive, because the calculation of the pseudoinverse is less expensive and the number of iteration steps is small. When the matrix size is large, the GRABK-a method is more challenging, because the method does not need to calculate the pseudoinverse, and the step size is adaptive.

In \Cref{fig3}, we plot the relative error of GRABK-c with a fixed stepsize $\alpha=\tfrac{1.95}{\beta_{\max}^2(A)\beta_{\max}^2(B)}$
and different block sizes $\tau_1=\tau_2=\tau$ for two matrix equation with Type I ($A=U_1D_1V_1^T$ with $m=500, p=250,r_1=150$ and $B=U_2D_2V_2^T$ with $n=500, q=250, r_2=150$) and Type II ($A={\rm randn}(500,250)$ and $B={\rm randn}(250,500)$).   Similarly, in \Cref{fig4}, we plot the relative error of GRABK-a with a fixed fixed stepsize  $\alpha_k=L_k$ and different block size $\tau_1=\tau_2=\tau$ for two matrix equation with Type I ($A=U_1D_1V_1^T$ with $m=500, p=250,r_1=150$ and $B=U_2D_2V_2^T$ with $n=500, q=250, r_2=150$) and Type II ($A={\rm randn}(500,250)$ and $B={\rm randn}(250,500)$). From \Cref{fig3,fig4}, we observed that increasing block size leads to a better convergence rate of the GRABK-c and GRABK-a methods. As the block size $\tau$ increases, the IT and CPU time first decreases, and then increases after reaching the minimum. This means that a proper block size $\tau$ can speed up the convergence speed of the GRABK-c and GRABK-a methods. If the GRABK-c and GRABK-a methods are implemented in parallel, the larger the block size $\tau$ will be the better.

\begin{figure}[!htb]
	\centering
	\subfigure[Type I]{\includegraphics[width=5in]{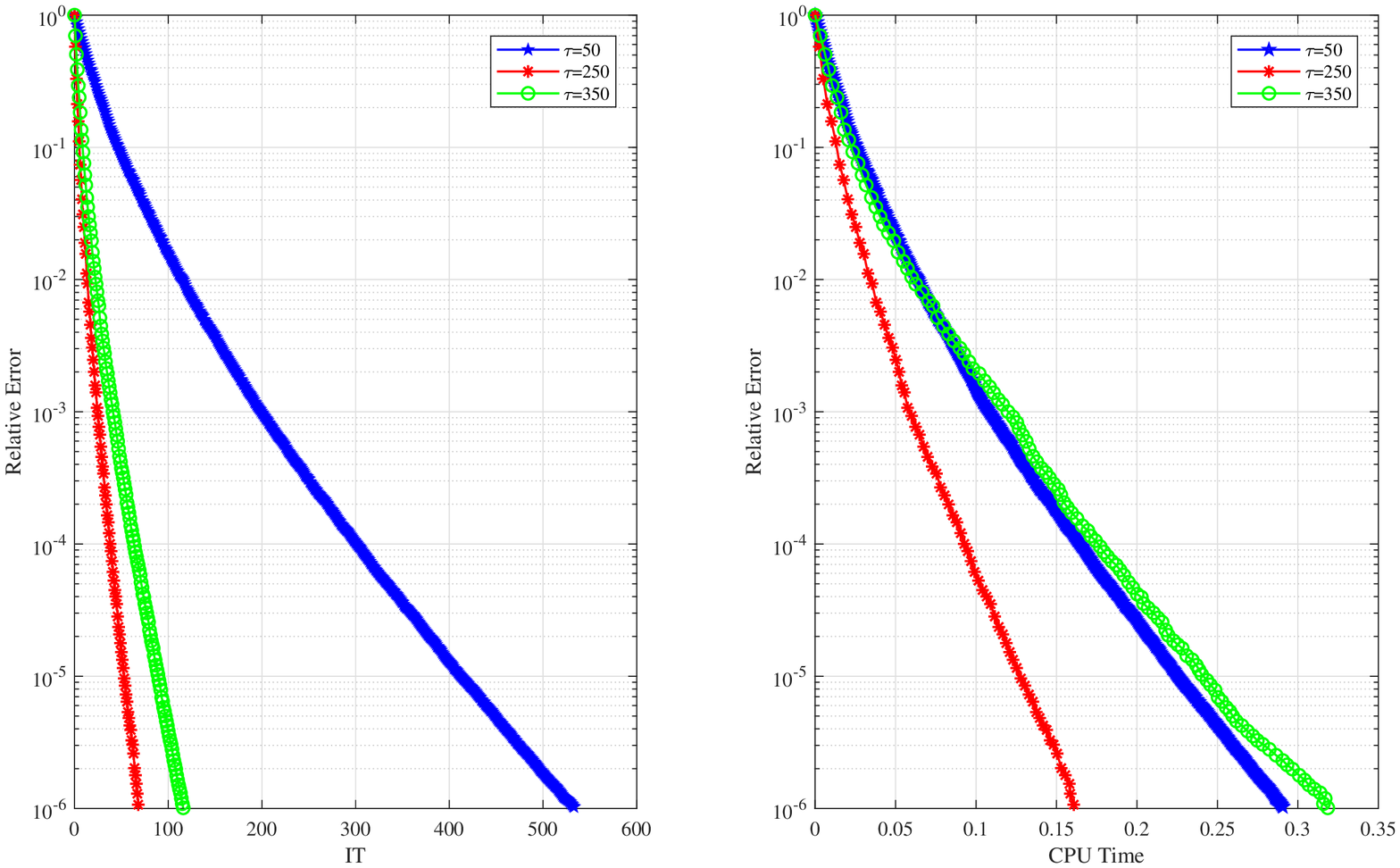}}
	\subfigure[Type II]{\includegraphics[width=5in]{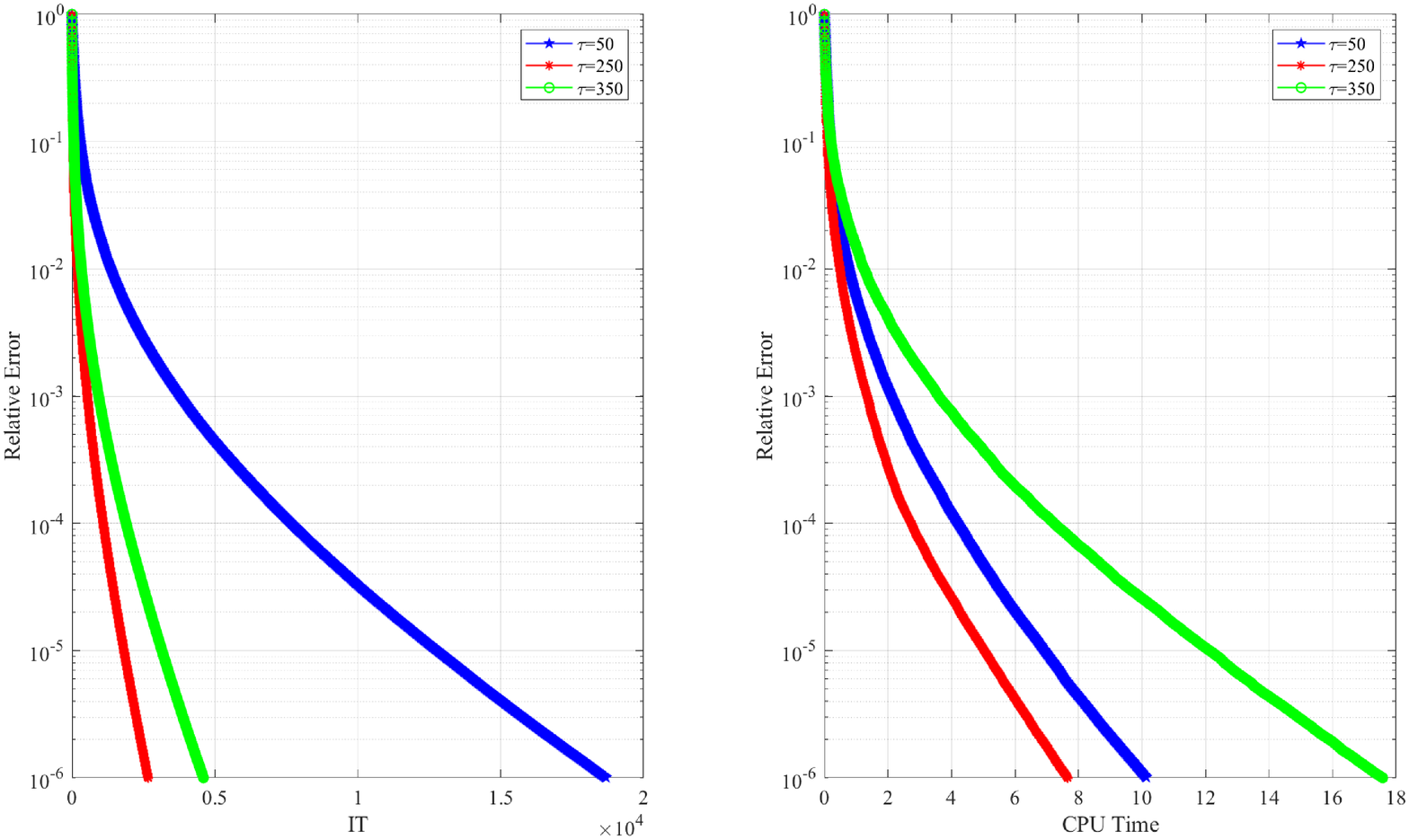}}
	\caption{The relative error of GRABK-c with stepsize $\alpha=\tfrac{1.95}{\beta_{\max}^2(A)\beta_{\max}^2(B)}$ and different block size $\tau_1=\tau_2=\tau$ for two matrix equation.}
	\label{fig3}
\end{figure}

\begin{figure}[!htb]
	\centering
	\subfigure[Type I]{\includegraphics[width=5in]{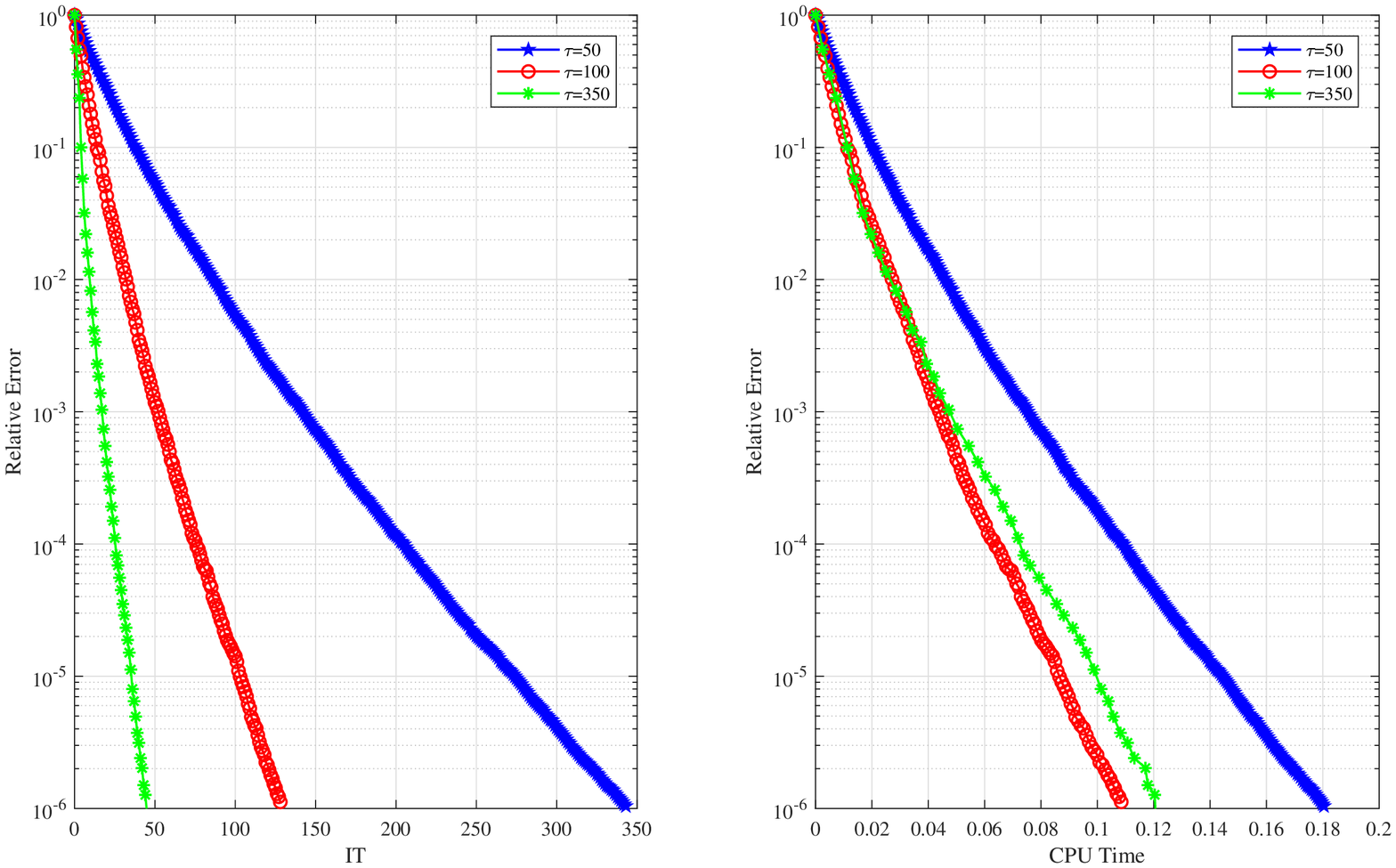}}
	\subfigure[Type II]{\includegraphics[width=5in]{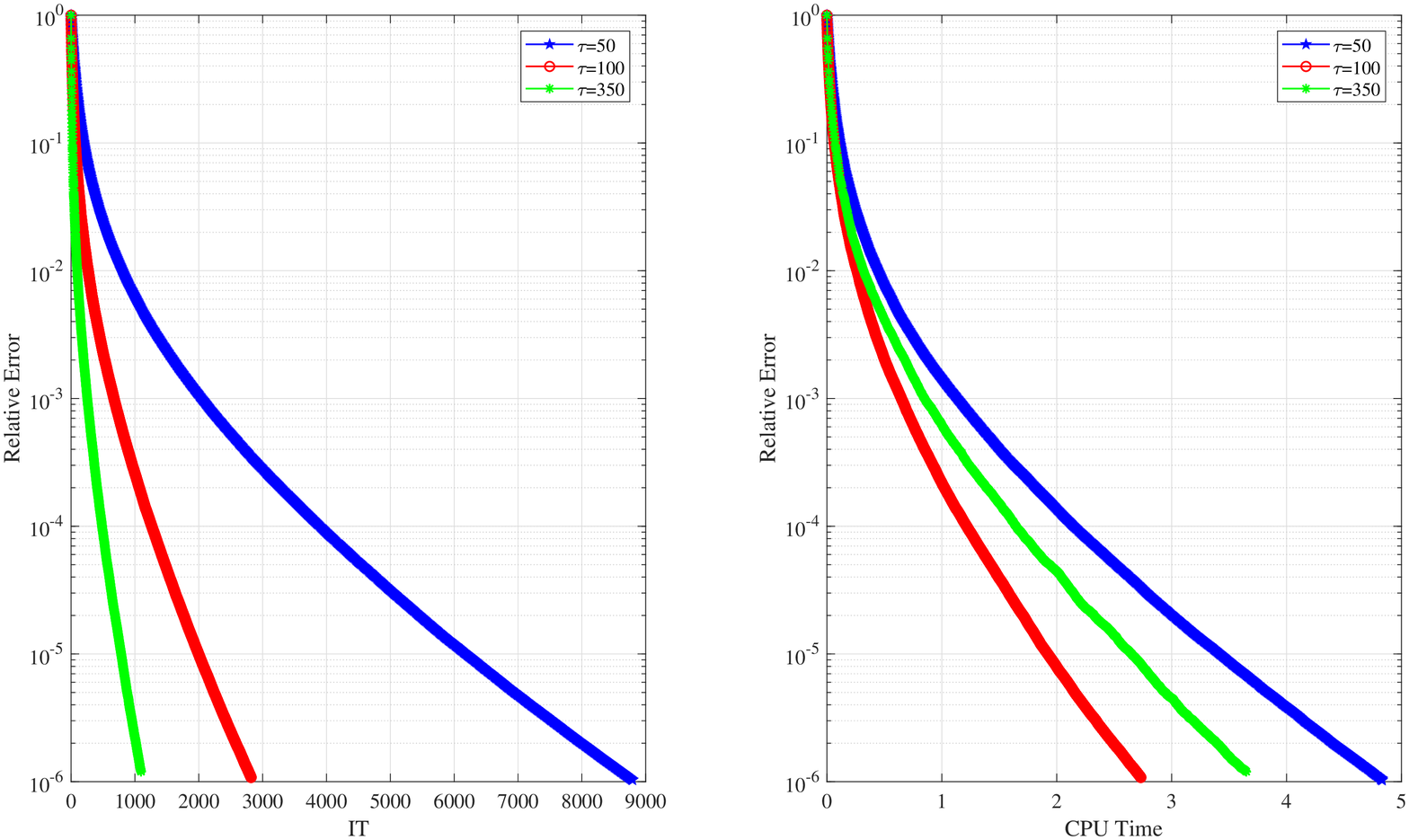}}
	\caption{The relative error of GRABK-a with stepsize $\alpha_k=L_k$ and different block size $\tau_1=\tau_2=\tau$  for two matrix equation.}
	\label{fig4}
\end{figure}

\subsection{Real-world sparse data}
We also test the RK, GRK, RBCD, GRBK, GRABK-c, and GRABK-a methods for solving matrix equations with sparse matrices from the Florida sparse matrix collection \cite{Davis2011}. In \Cref{tab3}, we report the average IT and CPU of RK, GRK, RBCD, GRBK, GRABK-c, and GRABK-a for solving matrix equations with sparse matrices. The parameters of these methods are the same as in the previous subsection. We observe that the GRK method out requires less IT and CPU than the RK method. The GRBK, GRABK-c, and GRABK-a methods vastly outperform the RBCD method in terms of IT and CPU time. Hence, the GRBK, GRABK-c, and GRABK-a methods are competitive, because the product of a sparse matrix and a vector is less expensive. When the matrix size is large, the GRABK-c and GRABK-a methods can be implemented in parallel.

\begin{table}[!htbp]
\centering
	\caption{The average IT and CPU of RK, GRK, RBCD, GRBK, GRABK-c, and GRABK-a for matrix equations with sparse matrices from \cite{Davis2011}.}
	\scalebox{0.6}{
\begin{tabular}{lllllllllllllll}
\toprule
$m$    & $p$    & $r_1$& $\tau_1$ & $q$    & $n$    & $r_2$ & $\tau_2$ &     & RK      & GRK     & RBCD     & GRBK    & GRABK-c & GRABK-a \\
\midrule
\multicolumn{4}{l}{ash219}  & \multicolumn{4}{l}{n4c5-b11}    & IT  & 19446.6        & 18920.1        & 317.6    & 663.7    & 3007.1   & 1237.9   \\
219    & 85   & 85   & 20   & 10     & 120    & 10     & 5    & CPU & 4.1239         & 0.5464         & 0.0562   & 0.0883   & 0.1173   & 0.0505   \\
\multicolumn{4}{l}{ash219}  & \multicolumn{4}{l}{relat4$^T$}     & IT  & 37283.6        & 38512.3        & -        & 408.6    & 4272.3   & 1519.3   \\
219    & 85   & 85   & 20   & 12     & 66     & 5      & 5    & CPU & 5.5010         & 1.0767         & -        & 0.1077   & 0.1774   & 0.0601   \\
\multicolumn{4}{l}{rel4}    & \multicolumn{4}{l}{n4c5-b11}    & IT  & 2457.1         & 2382.9         & 679.4    & 187.3    & 968.0    & 20355.6  \\
66     & 12   & 5    & 5    & 10     & 120    & 10     & 5    & CPU & 0.2004         & 0.0618         & 0.0264   & 0.0178   & 0.0309   & 1.7052   \\
\multicolumn{4}{l}{rel4}    & \multicolumn{4}{l}{relat4$^T$}     & IT  & 5399.9         & 5409.5         & -        & 288.8    & 2801.7   & 688.7    \\
66     & 12   & 5    & 5    & 12     & 66     & 5      & 5    & CPU & 0.3149         & 0.1268         & -        & 0.0241   & 0.0826   & 0.0210   \\
\multicolumn{4}{l}{mk10-b1} & \multicolumn{4}{l}{bibd\_11\_5} & IT  & $>$ & $>$ & 1214     & 1123.8   & 3242.8   & 1862.3   \\
630    & 45   & 44   & 50   & 55     & 462    & 55     & 50   & CPU & $>$ & $>$ & 1.7975 & 0.7687 & 0.3273 & 0.1882 \\
\multicolumn{4}{l}{mk10-b1} & \multicolumn{4}{l}{rel5$^T$}       & IT  & $>$ &$>$ & -        & 595.4    & 10102.5  & 1971.6   \\
630    & 45   & 44   & 50   & 35     & 340    & 24     & 30   & CPU & $>$&$>$ & -        & 0.2681   & 0.8224   & 0.1625   \\
\multicolumn{4}{l}{relat5}  & \multicolumn{4}{l}{bibd\_11\_5} & IT  & $>$ & $>$ & 11985    & 812      & 44387    & 4378     \\
340    & 35   & 24   & 30   & 55     & 462    & 55     & 50   & CPU & $>$ &$>$& 10.5393  & 0.4626   & 3.6697   & 0.4041   \\
\multicolumn{4}{l}{relat5}  & \multicolumn{4}{l}{rel5$^T$}       & IT  & $>$&$>$ & -        & 554.3    & 91091.1  & 4597.0   \\
340    & 35   & 24   & 30   & 35     & 340    & 24     & 30   & CPU & $>$ & $>$ & -        & 0.1967   & 6.3276   & 0.3362   \\
\multicolumn{4}{l}{relat6}  & \multicolumn{4}{l}{ash958$^T$}     & IT  &$>$ &$>$ & 122525.3 &490.5 &90579.2&7304.7  \\
2340   & 157  & 137  & 200  & 292    & 958    & 292    & 200  & CPU & $>$&$>$ &218.4782&	4.7630& 	35.6329& 	13.7720  \\
\bottomrule
\end{tabular}}
\label{tab3}
\end{table}

As we can see from the numerical results, the GRBK, GRABK-c, and GRABK-a methods require less IT than the RBCD method for dense and sparse matrices. The GRBK, GRABK-c, and GRABK-a methods require fewer CPU times than the RBCD method. The RBCD method does not converge in the case that the matrix $B$ is not full row rank. The GRBK, GRABK-c, and GRABK-a methods can be implemented more efficiently than the RBCD  method in many computer architectures, and the GRABK-c and GRABK-a methods can be deployed on parallel computing units to reduce the computational time.

\subsection{An application to image restoration}
In this section, we illustrate the effectiveness of our proposed method with several examples of image restoration \cite{Hansen}. Let $X^*=(x_{ij}^*)_{p\times q}$ and $X=(x_{ij})_{p\times q}$ be the original and restored images, respectively. The quality of the restoration result is compared by using the peak signal-to-noise ratio (PSNR) with the following formula
\begin{equation*}
{\rm PSNR}=10\log_{10}\left(\tfrac{(\max\{x_{ij}^*\})^2}{\rm MSE}\right),
\end{equation*}
where ${\rm MSE}=\tfrac{\sum_{i=1}^{p}\sum_{j=1}^{q}(x^*_{ij}-x_{ij})^2}{pq}$. The original image is represented by an array $n\times n$ pixels. We consider the matrix equation \eqref{eq:systems}, where $C$ is the observed blurred image, and the blurring matrices $A$ is the uniform Toeplitz matrices of size $n\times n$ defined by
\begin{equation*}
a_{ij}=\begin{cases}
           \tfrac{1}{2r-1}, & |i-j|\leq r \\
           0, & \mbox{otherwise},
         \end{cases}
\end{equation*}
and $B$ is the Gaussian Toeplitz matrices of size $n\times n$ given by
\begin{equation*}
b_{ij}=\begin{cases}
           \tfrac{1}{\sigma\sqrt{2\pi}}\exp\left(-\tfrac{(i-j)^2}{2\sigma^2}\right), & |i-j|\leq r \\
           0, & \mbox{otherwise}.
         \end{cases}
\end{equation*}
For $r=3$ and $\sigma=7$, we apply RBCD, GRBK, GRABK-c with stepsize $\alpha=\tfrac{1.95}{\beta_{\max}^2(A)\beta_{\max}^2(B)}$, and GRABK-a with stepsize $\alpha_k=L_k$ for this example, and  block size $\tau_1=\tau_2=\tfrac{n}{2}$. In \Cref{fig5,fig6,fig7,fig8}, the original, blurred, and de-blurred images are shown.

\begin{figure}[!htb]
	\centering
	\subfigure{\includegraphics[width=5in]{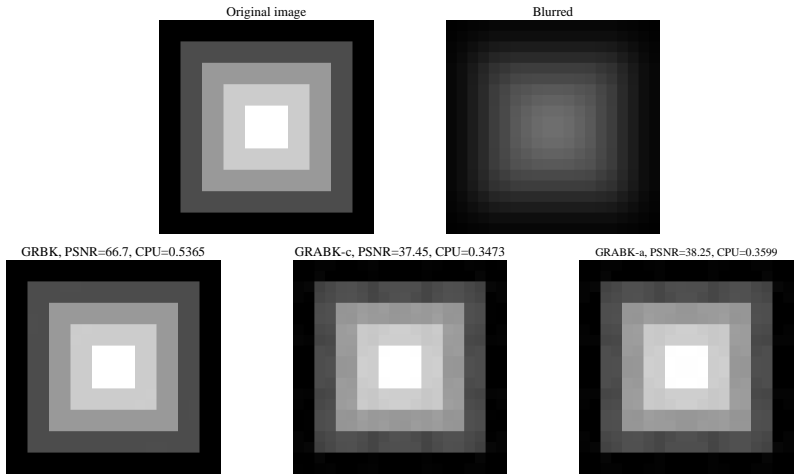}}
	\caption{Digital image, $n=20$.}
	\label{fig5}
\end{figure}

\begin{figure}[!htb]
	\centering
	\subfigure{\includegraphics[width=5in]{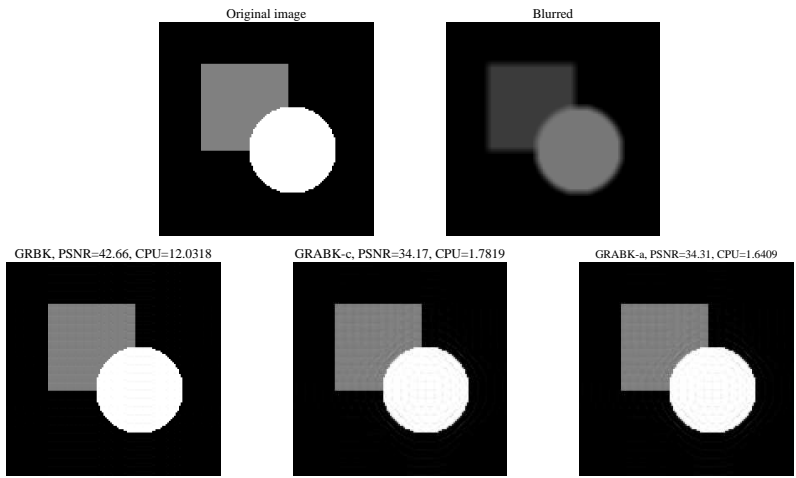}}
	\caption{Digital image, $n=128$.}
	\label{fig6}
\end{figure}

\begin{figure}[!htb]
	\centering
	\subfigure{\includegraphics[width=5in]{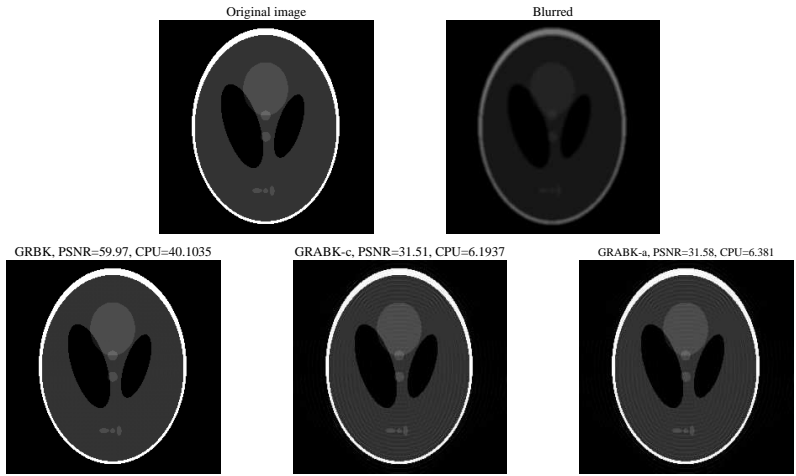}}
	\caption{Modified Shepp-Logan, $n=256$.}
	\label{fig7}
\end{figure}

\begin{figure}[!htb]
	\centering
	\subfigure{\includegraphics[width=5in]{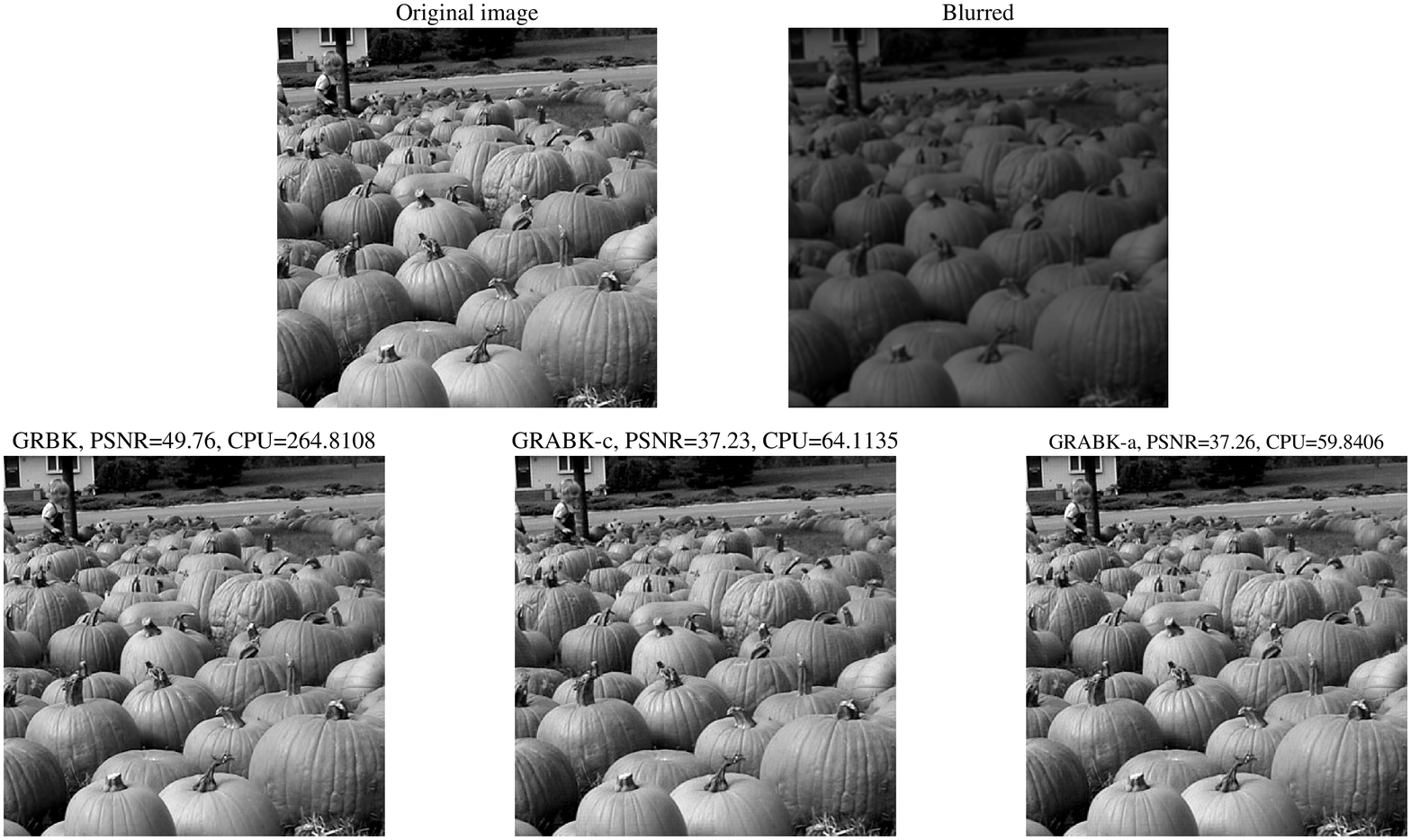}}
	\caption{Pumpkins, $n=512$.}
	\label{fig8}
\end{figure}

The blurring of these images is caused by the out-of-focus and atmospheric turbulence. The capability of our proposed methods is confirmed by de-blurring the blurred image. From \Cref{fig5,fig6,fig7,fig8}, we see that the PSNR of the restored image by the GRBK method is better than those by GRABK-c, and GRABK-a. However, the GRBK method requires more CPU times than the GRABK-c, and GRABK-a, because the GRBK method needs to calculate the pseudoinverse. When the image is large, the GRABK-c and GRABK-a methods can restore the image better and require fewer CPU times, and can be implemented in parallel.

\section{Conclusions}\label{Sec:Con}
We have proposed the global randomized block Kaczmarz method to solve large-scale matrix equations and prove its convergence theory. In addition, we also have presented a class of global randomized average block Kaczmarz methods for solving large-scale matrix equations, which provide a general framework for the design and analysis of global randomized block Kaczmarz method to solve large-scale matrix equations. Our convergence results provide a theoretical guarantee for the convergence of the global randomized average block Kaczmarz methods with constant and adaptive stepsizes. The numerical examples also illustrate the benefits of the new methods.


\end{document}